\documentclass[leqno,12pt]{article}
\usepackage{amssymb,amsmath,amsfonts,amsthm}
\usepackage{authblk}
\usepackage{epsfig,color,mathrsfs}
\usepackage{graphicx}
\usepackage{amscd}
\usepackage{caption}
\usepackage{mathrsfs,bbm}
\usepackage{multicol}
\usepackage{color}

\numberwithin{equation}{section}

\newcommand{\R}{\mathbb R}
\newcommand{\Ro}{\mathbb R^N\setminus \overline B_1}
\newcommand{\Lg}{{\mathcal L}_g}
\newtheorem{theorem}{Theorem}[section]
\newtheorem{corollary}[theorem]{Corollary}
\newtheorem{lemma}[theorem]{Lemma}
\newtheorem{proposition}[theorem]{Proposition}

 \usepackage{amsmath,amsthm,amsfonts,amssymb}
\usepackage[mathscr]{euscript}
 \usepackage{hyperref} 
 \usepackage{mathrsfs} 
\usepackage{graphicx}
\usepackage{color}
  \usepackage{epsfig}

\begin{document}
\title{\vskip-0.3in Double phase inequalities with convolution nonlinearity in exterior domains} 

\author[ ]{Marius Ghergu}

\affil[ ]{School of Mathematics and Statistics, University College Dublin}
\affil[ ]{Belfield Campus, Dublin 4, Ireland}
\affil[ ]{and}
\affil[ ]{Institute of Mathematics Simion Stoilow of the Romanian Academy} 
\affil[ ]{21 Calea Grivitei St., 010702 Bucharest, Romania}
\affil[ ]{E-mail: {\tt marius.ghergu@ucd.ie}}

\maketitle

\begin{abstract} 
We discuss the existence of $C^1$-solutions for two related double phase inequalities:
\begin{equation*}
{\mathcal L}_g u\pm \Delta_s u\geq (|x|^{-\alpha}*u^p)u^q \quad\mbox{ in }\mathbb R^N\setminus \overline B_1, N\geq 1,\tag{$P^\pm$}
\end{equation*}
in which $\Delta_s u:={\rm div}\big(|\nabla u|^{s-2}\nabla u\big)$ is the $s$-Laplace operator, $s>1$, and
$$
{\mathcal L}_g u:= -{\rm div}\Big(|\nabla u|^{m-2}g(|\nabla u|)\nabla u\Big),\quad m>s>1,
$$
where $g:[0, \infty)\to (0, \infty)$ is a $C^1(0, \infty)\cap C[0, \infty)$  non-increasing function with some specific behaviour near the origin. In the above context, the general form of ${\mathcal L}_g u$ includes the case of $m$-Laplace and $m$-mean curvature operator.
Our study reveals a sharp distinction between $(P^+)$ and $(P^-)$. Precisely, we show that the inequality $(P^+)$ has solutions for all $m>s>1$ and $q>s-1$. In contrast, $(P^-)$ has solutions if and only if $p$ and $q$ are sufficiently large. We also link the solvability of $(P^-)$ with that of the corresponding equation ${\mathcal L}_g u- \Delta_s u= (|x|^{-\alpha}*u^p)u^q$  in $\mathbb R^N\setminus \overline B_1$, for which we derive optimal conditions in terms of $p, q, \alpha, s$ and $N$.
The approach combines integral estimates with a new sub and supersolution method that accounts for the presence of the convolution term.  
\end{abstract}

\noindent{\bf Keywords: Quasilinear elliptic operators; double phase inequality; convolution nonlinearity;  existence of solutions} 

\medskip

\noindent{\bf 2020 AMS MSC: 35J62, 35J92, 35J93, 35B53, 35B45} 
\section{Introduction}

This article is concerned with two double phase elliptic inequalities of the type
\begin{equation*}
\Lg u+\Delta_s u\geq (|x|^{-\alpha}*u^p)u^q \quad\mbox{ in }\Ro,N\geq 1, \tag{$P^+$}
\end{equation*}
and
\begin{equation*}
\Lg u-\Delta_s u\geq (|x|^{-\alpha}*u^p)u^q \quad\mbox{ in }\Ro, N\geq 1,\tag{$P^-$}
\end{equation*}
in which $\Delta_s u:={\rm div}\big(|\nabla u|^{s-2}\nabla u\big)$ is the $s$-Laplace operator, $s>1$, and
$$
\Lg u:= -{\rm div}\Big(|\nabla u|^{m-2}g(|\nabla u|)\nabla u\Big),\quad m>s>1,
$$
where $g:[0, \infty)\to (0, \infty)$ is a $C^1(0, \infty)\cap C[0, \infty)$  non-increasing function such that 
\begin{equation}\label{cong}
\lim_{t\to 0^+}\frac{tg'(t)}{g(t)}=0.
\end{equation}
Typical examples of functions $g$ with the above property are:
\begin{itemize} 
\item $g(t)\equiv 1$ for which $\Lg u$ becomes the $m$-Laplace operator and thus $(P^+)$ and $(P^-)$ read 
\begin{equation}\label{lap1}
-\Delta_m u\pm \Delta_s u\geq (|x|^{-\alpha}*u^p)u^q \quad\mbox{ in }\Ro\, , m>s>1.
\end{equation}

\item $g(t)=\frac{1}{\sqrt{1+t^m}}$, $m\geq 2$, for which $\Lg u$ becomes the $m$-mean curvature operator and thus $(P^+)$ and $(P^-)$ read 
\begin{equation}\label{lap2}
-H_m u\pm \Delta_s u\geq (|x|^{-\alpha}*u^p)u^q \quad\mbox{ in }\Ro, m>s>1,
\end{equation}
where 
\begin{equation}\label{mcv}
H_m u:={\rm div}\Bigg( \frac{|\nabla u|^{m-2}}{\sqrt{1+|\nabla u|^m}}\nabla u\Bigg).
\end{equation}
\item $g(t)=\log^{-\theta}(2+t^k) $, $\theta, k>0$.
\end{itemize} 
In $(P^\pm)$ we assume $\alpha\in (0, N)$, $p, q>0$, while $|x|^{-\alpha}*u^p$ denotes the convolution operation over $\R^N\setminus B_1$, that is,
$$ 
|x|^{-\alpha}*u^p:=\int\limits_{\R^N\setminus B_1} \frac{u^p(y)}{|x-y|^{-\alpha}} dy \quad\mbox{ for all }x\in \Ro. 
$$ 

We say that a function $u\in C(\Ro)\cap W_{loc}^{1, m}(\Ro)$ is a positive  solution of $(P^-)$ if:
\begin{itemize}
\item $u>0$ and
\begin{equation}\label{c1}
\int\limits_{\R^N\setminus B_1}\frac{u^{p}(y)}{1+|y|^{\alpha}}dy< \infty.
\end{equation}
\smallskip

\item for any $\phi \in C_{c}^{\infty}(\Ro)$, $\phi\geq 0$, we have
\end{itemize}
\begin{equation}\label{c2}
\int\limits_{\R^N\setminus B_1} \Big(|\nabla u|^{m-2}g(|\nabla u|)+|\nabla u|^{s-2}\Big) \nabla u\cdot \nabla \phi \geq \int\limits_{\R^N\setminus B_1} (|x|^{-\alpha}*u^p)u^q \phi.
\end{equation}
Switching the addition symbol in \eqref{c2} to subtraction  we obtain the definition of a positive solution to $(P^+)$.

Currently, there is an increasing interest in double phase problems \cite{CJJ26, CPR26, TZ26, WS25, Z26, ZLP25} due to their relevance in physics. In such a setting, double phase models are linked to heterogeneous structures and their anisotropic properties. On the other hand, PDEs featuring convolution terms were first introduced by Hartree \cite{Har28a, Har28b, Har28c} as mathematical models in quantum physics. In \cite{GKS20} the following inequality is considered
$$
\Lg u\geq (|x|^{-\alpha}*u^p)u^q\quad\mbox{ in }\R^N\setminus \overline B_1.
$$
Similar problems were discussed in punctured balls \cite{FG20, GZ26}, cone like domains \cite{GZ23},  and even in a time dependent settings \cite{FG22a, FG22b}.

The present article brings together double phase operators and nonlinear convolution terms in the form of $(P^+)$ and $(P^-)$. Problems that feature nonlocal operators or convolution terms can also be found in \cite{LR26, MNV24, NS26}.

Our study reveals a striking difference between $(P^+)$ and $(P^-)$. As concerns $(P^+)$ we have the following result.

\begin{theorem}\label{tp}
Assume $g$ satisfies \eqref{cong}. Then, for all $m>s>1$ and $q>s-1$, the inequality $(P^+)$ has a positive solution. 
\end{theorem}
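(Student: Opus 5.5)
The plan is to write down an explicit radial solution with exponential decay,
$$
u(x)=\varepsilon e^{-k|x|},\qquad x\in\Ro,
$$
with $k>0$ large and $\varepsilon>0$ small, and to check $(P^+)$ pointwise. Since $u\in C^2(\Ro)$, the weak formulation \eqref{c2} (with the minus sign) then follows by integration by parts. Condition \eqref{c1} is immediate, because $u^p$ decays exponentially and is therefore integrable against $(1+|y|^\alpha)^{-1}$. The heuristic is that $u'<0$ and $|u'|$ is small, so $|u'|^{s-2}u'$ dominates $|u'|^{m-2}g(|u'|)u'$ (because $m>s$). For decaying exponentials the $s$-Laplacian is positive, and it is exponentially larger than the right-hand side because $q>s-1$.

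\textbf{Step 1: the $s$-Laplacian.} Write $r=|x|$ and $t=|u'|=\varepsilon k e^{-kr}$. The radial formula gives
$$
\Delta_s u=r^{1-N}\big(r^{N-1}|u'|^{s-2}u'\big)'=(\varepsilon k)^{s-1}e^{-k(s-1)r}\Big(k(s-1)-\frac{N-1}{r}\Big).
$$
For $k\ge 2(N-1)/(s-1)$ and $r>1$ this is at least $\tfrac12(s-1)k^s\varepsilon^{s-1}e^{-k(s-1)r}$.

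\textbf{Step 2: the $g$-term.} With $\Phi(t)=t^{m-1}g(t)$ we have
$$
\Lg u=-r^{1-N}\big(-r^{N-1}\Phi(t)\big)'=\Phi'(t)\,t'+\frac{N-1}{r}\Phi(t),\qquad t'=-kt .
$$
Here $\Phi'(t)=t^{m-2}g(t)\big(m-1+tg'(t)/g(t)\big)$. Since $g$ is non-increasing, $tg'/g\le 0$, and by \eqref{cong} it tends to $0$ as $t\to0^+$. Because $t\le \varepsilon k e^{-k}$ is small once $\varepsilon$ is small (for fixed $k$), we get $\Phi'(t)\in\big[\tfrac{m-1}{2}t^{m-2}g(t),\,(m-1)t^{m-2}g(t)\big]$. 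Consequently $\Lg u\ge -(m-1)k\,t^{m-1}g(0)$; note $g(t)\le g(0)$ since $g$ is non-increasing. The only role of \eqref{cong} is to keep $\Phi'$ comparable to $t^{m-2}g$. This is harmless even if one only gets $\Lg u\ge -Ck\,t^{m-1}$.

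Comparing the two steps, the $s$-term dominates because
$$
\frac{k\,t^{m-1}}{k^{s}\varepsilon^{s-1}e^{-k(s-1)r}}=k^{m-s}\varepsilon^{m-s}e^{-k(m-s)r}\le (\varepsilon k)^{m-s},
$$
which is small for $\varepsilon$ small and $k$ fixed. Hence $\Lg u+\Delta_s u\ge c\,k^s\varepsilon^{s-1}e^{-k(s-1)r}$ in $\Ro$.

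\textbf{Step 3: the convolution term.} Since $\alpha\in(0,N)$, the convolution $|x|^{-\alpha}*u^p$ is bounded in $\Ro$, uniformly in $x$ and in fact $O(r^{-\alpha})$. Splitting into $|x-y|<1$ and $|x-y|\ge1$ gives
$$
|x|^{-\alpha}*u^p\le \varepsilon^p\Big(\int_{|z|<1}|z|^{-\alpha}dz+\int_{\R^N\setminus B_1}e^{-kp|y|}dy\Big)=:C_k\varepsilon^p .
$$
Thus the right-hand side is at most $C_k\varepsilon^{p+q}e^{-kqr}$. Since $q>s-1$ we have $e^{-kqr}\le e^{-k(s-1)r}$ on $r>1$. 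It therefore suffices that $C_k\varepsilon^{p+q}\le c\,k^s\varepsilon^{s-1}$, which holds for $\varepsilon$ small whenever $p+q>s-1$; this is true since $p>0$ and $q>s-1$.

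\textbf{Order of choices.} First fix $k$ as in Step 1, then take $\varepsilon$ small so that Steps 2 and 3 hold simultaneously. The main point requiring care is the uniformity near $r=1$ in Step 2, namely controlling the sign-indefinite contribution of $g'$ via \eqref{cong}. Choosing $\varepsilon$ small makes $|\nabla u|$ uniformly small on all of $\Ro$, which resolves this.
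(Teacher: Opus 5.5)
Your proof is correct, but it takes a different route from the paper.

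\textbf{The paper's route.} The paper uses the power profile $u=\kappa|x|^{-\gamma}$ with $\gamma>\max\{N/p,\,(N-s)/(s-1),\,(s-\alpha)/(q-s+1)\}$. Each condition on $\gamma$ has a role:
\begin{itemize}
\item $\gamma>(N-s)/(s-1)$ makes $\Delta_s u$ positive through the radial formula.
\item $p\gamma>N$ allows Lemma \ref{lbas}(ii) to give $|x|^{-\alpha}*u^p\le C_0\kappa^p|x|^{-\alpha}$.
\item $(q-s+1)\gamma>s-\alpha$ is exactly what makes $C_0\kappa^{p+q}|x|^{-\alpha-q\gamma}$ decay at least as fast as $\kappa^{s-1}|x|^{-(\gamma+1)(s-1)-1}$.
\end{itemize}
Smallness of $\kappa$ then absorbs the $\Lg$ term and the constants.

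\textbf{Your route.} With $u=\varepsilon e^{-k|x|}$ you replace all three requirements by elementary facts. Positivity of $\Delta_s u$ comes from $k(s-1)$ dominating $(N-1)/r$. The convolution only needs to be bounded, and your splitting into $|x-y|<1$ and $|x-y|\ge 1$ bypasses Lemma \ref{lbas} entirely. The decay comparison reduces to $e^{-kqr}\le e^{-k(s-1)r}$.

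\textbf{What each buys.} Your argument is self-contained. It also covers the endpoint $q=s-1$: the comparison becomes an equality, and smallness of $\varepsilon^{p}$ suffices. The paper's ansatz at $q=s-1$ would instead require $\alpha>s$. Your construction also shows that $(P^+)$ admits exponentially decaying solutions. This contrasts sharply with Lemma \ref{ls}, which forces every solution of $(P^-)$ to satisfy $u\ge c|x|^{-(N-s)/(s-1)}$. The paper's power-law family, for its part, is reused as the supersolution $\overline u$ in the proof of Theorem \ref{tm2}, so it fits the paper's overall framework.

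\textbf{A minor remark on Step 2.} You only use the upper bound $t\Phi'(t)\le (m-1)t^{m-1}g(t)\le (m-1)t^{m-1}g(0)$. This follows from $g'\le 0$ alone, because when $m-1+tg'/g<0$ the corresponding term has the favourable sign. So the lower bound on $\Phi'$ you derive from \eqref{cong} is not needed for the inequality.
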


We now turn to the study of inequality $(P^-)$. In this framework, we assume that apart from \eqref{cong}, the function $g$ satisfies 
\begin{equation}\label{cong1}
\frac{tg'(t)}{g(t)}+m-1\geq 0 \quad\mbox{ for all }t>0.
\end{equation}
One can easily check that the functions $g(t)=1$ and $g(t)=\frac{1}{\sqrt{1+t^m}}$, $m\geq 2$ which yield the $m$-Laplace and $m$-mean curvature operators respectively, satisfy both \eqref{cong} and \eqref{cong1}.

We first show that no positive solutions exist if $m>s\geq N$. 
\begin{theorem}\label{tm1}
Assume $g$ satisfies \eqref{cong}, \eqref{cong1} and $m>s\geq N$. Then, the inequality $(P^-)$ has no positive solutions. 
\end{theorem}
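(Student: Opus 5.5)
Our plan is to argue by contradiction. Suppose $u$ is a positive solution of $(P^-)$ with $m>s\geq N$. The main observation is that $(P^-)$ forces $u$ to be a positive supersolution of a degenerate operator for which $s\ge N$ rules out nonconstant behaviour at infinity, while the convolution term is bounded below by a multiple of $|x|^{-\alpha}$. We would combine this with a test-function (nonlinear capacity) estimate.

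\textbf{Step 1 (lower bound for the convolution).} Since $u>0$ and is continuous, fix a compact annulus $K=\{2\le |y|\le 3\}$ with $\int_K u^p\,dy=c_0>0$. For $|x|\ge 4$ we have $|x-y|\le 2|x|$ on $K$. Hence $(|x|^{-\alpha}*u^p)(x)\ge c|x|^{-\alpha}$ for $|x|$ large. Thus $u$ satisfies, in the weak sense,
$$
\Lg u-\Delta_s u\geq c|x|^{-\alpha}u^q\ \ge 0 \quad\mbox{ in }|x|>4.
$$

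\textbf{Step 2 (test function estimate).} Test \eqref{c2} (in its $(P^-)$ version) with $\phi=u^{-\gamma}\psi^k$, where $\gamma\in(0,1)$ is small and $\psi$ is a cutoff equal to $1$ on $B_{R}\setminus B_{2R_0}$ and supported in $B_{2R}\setminus B_{R_0}$. Some care is needed, since the operator $\Lg -\Delta_s$ is not monotone: its flux is $A(t)t=(t^{m-2}g(t)-t^{s-2})t$. By \eqref{cong1}, $t^{m-1}g(t)$ is nondecreasing, and together with \eqref{cong} and $m>s$ this gives the following. Where $|\nabla u|$ is small, the $-\Delta_s$ part dominates, so $A<0$ and the operator behaves like $+\Delta_s$. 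Where $|\nabla u|$ is large, the $\Lg$ part dominates.

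The route we would try first is to use the sign structure together with a comparison with radial functions. Radial solutions of $\Lg w=\Delta_s w$ satisfy
$$
r^{N-1}\bigl(w'^{m-1}g(|w'|)-|w'|^{s-1}\bigr)=\mathrm{const}.
$$
Since $s\ge N$, the $s$-harmonic fundamental profile $r^{(s-N)/(s-1)}$ (or $\log r$ when $s=N$) does not decay. We would then show that a positive supersolution must have gradient small at infinity along a sequence of spheres. Integrating the inequality over annuli via the divergence theorem with spherical averages then gives
$$
\frac{d}{dr}\Bigl(r^{N-1}\,\overline{\Phi}(r)\Bigr)\le -c\,r^{N-1-\alpha}\,\overline{u^q}(r),
$$
where $\Phi$ denotes the radial flux.

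\textbf{Step 3 (contradiction).} Integrating this from $R_0$ to $R$ forces the flux $-r^{N-1}\Phi$ to become increasingly negative. This produces a lower bound $|u'|\gtrsim r^{-(N-1)/(s-1)}$ or $r^{-(N-1)/(m-1)}$ in the relevant regime, with $u$ decreasing. Integrating once more, the condition $s\ge N$ (hence $m>N$) makes $\int^\infty r^{-(N-1)/(s-1)}dr$ diverge. Therefore $u\to-\infty$, contradicting $u>0$.

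The main obstacle is the non-monotone, mixed-sign operator $\Lg-\Delta_s$ and the non-radial nature of $u$. Making the averaging and flux argument rigorous, in particular handling the region where $A(|\nabla u|)$ changes sign, is where \eqref{cong} and \eqref{cong1} must be used carefully. We expect to do this through the monotonicity of $t\mapsto t^{m-1}g(t)-t^{s-1}$ for $t$ large and an a priori control of the small-gradient regime.
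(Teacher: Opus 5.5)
Your plan rests on a misreading of the operator in $(P^-)$. With the paper's convention $\Delta_s u={\rm div}(|\nabla u|^{s-2}\nabla u)$, one has $\Lg u-\Delta_s u=-{\rm div}\big[(|\nabla u|^{m-2}g(|\nabla u|)+|\nabla u|^{s-2})\nabla u\big]$, which is exactly the flux in \eqref{c2}.

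\begin{itemize}
\item By \eqref{cong1}, $t\mapsto t^{m-1}g(t)+t^{s-1}$ is strictly increasing. So the operator is monotone, and Proposition \ref{cp} gives a comparison principle for it on bounded domains.
\item The mixed-sign flux $t^{m-1}g(t)-t^{s-1}$ you work with is the one for $(P^+)$. The ``main obstacle'' you identify therefore does not exist for $(P^-)$.
\item For the same reason, your radial first integral and your claim that the operator behaves like $+\Delta_s$ for small gradients are wrong.
\end{itemize}

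Once the sign is corrected, your radial heuristic is sound. The quantity $r^{N-1}A(|u'|)u'$ is nonincreasing. If it turns negative, then $|u'|\gtrsim r^{-(N-1)/(s-1)}$, and $s\ge N$ makes $\int^\infty r^{-(N-1)/(s-1)}\,dr$ diverge. If it never turns negative, $u$ is nondecreasing, which contradicts \eqref{c1}; here the contradiction comes from \eqref{c1}, not from $u>0$, and your plan does not treat this branch.

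The genuine gap is the passage to non-radial $u$.
\begin{itemize}
\item Spherical averaging of the divergence only controls $\int_{\partial B_r}A(|\nabla u|)\,\partial_r u$. This is nonlinear in $\nabla u$, and its weight depends on the tangential gradient.
\item Hence a bound $r^{N-1}\overline{\Phi}(r)\le -K$ gives no lower bound on $-\frac{d}{dr}\overline{u}(r)$, let alone a pointwise bound $|u'|\gtrsim r^{-(N-1)/(s-1)}$. So ``$u\to-\infty$'' does not follow.
\item Likewise, a nonnegative averaged flux does not make the average of $u$ monotone.
\item Your claim that the gradient is small along a sequence of spheres is never proved.
\end{itemize}

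The missing idea is to replace averaging by comparison with explicit radial subsolutions.
\begin{itemize}
\item By \eqref{cong}, choose $\tilde t$ with $tg'(t)/g(t)>s-m$ on $(0,\tilde t)$. Take $0<c<\min\{\tilde t,\min_{\partial B_2}u\}$ and set $v=c|x|^{-\gamma}-\varepsilon$ with $0<\gamma,\varepsilon<1$.
\item Since $s\ge N$, one has $(\gamma+1)(s-1)>N-1$ for every $\gamma>0$. This gives $-\Delta_s v\le 0$.
\item For $t=|v'|<\tilde t$ one has $m-1+tg'(t)/g(t)>s-1$, which gives $\Lg v\le 0$ in $\R^N\setminus B_2$.
\item Since $v<0<u$ near infinity and $v<u$ on $\partial B_2$, Proposition \ref{cp} on $B_R\setminus B_2$ yields $u\ge c|x|^{-\gamma}-\varepsilon$.
\item Letting $\varepsilon\to0$ and then $\gamma\to0$ gives $u\ge c$ in $\R^N\setminus B_2$. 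This contradicts \eqref{c1}, because $\alpha<N$.
\end{itemize}
This argument uses only $\Lg u-\Delta_s u\ge 0$, so your Step 1 lower bound on the convolution is unnecessary.
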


Our next result discusses the existence of solutions to $(P^-)$. 

\begin{theorem}\label{tm2}
Assume $g$ satisfies \eqref{cong}, \eqref{cong1}, $m>s>1$, $s<N$, $q\geq \alpha-1$ and $\min\{p,q\}>m-1$. 
The following statements are equivalent:
\begin{enumerate}
\item[(i)] The inequality $(P^-)$ has a positive solution;
\item[(ii)] The equation 
\begin{equation}\label{eqpq}
\Lg-\Delta_s u=(|x|^{-\alpha}*u^p)u^q
\quad\mbox{ in }\R^N\setminus \overline B_1,
\end{equation}  
has a positive $C^1$-solution;
\item[(iii)] The exponents $p$ and $q$ satisfy
\begin{equation}\label{pq}
\min\{p, q\}>\frac{(s-1)(N-\alpha)}{N-s} \quad\mbox{ and }\quad p+q>\frac{(s-1)(2N-\alpha)}{N-s}.
\end{equation}
\end{enumerate}
\end{theorem}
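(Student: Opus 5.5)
The equivalence splits into three implications. The first, (ii)$\Rightarrow$(i), is immediate, since any positive $C^1$-solution of \eqref{eqpq} is a positive solution of $(P^-)$. The remaining work is (i)$\Rightarrow$(iii) and (iii)$\Rightarrow$(ii).

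\textbf{(i)$\Rightarrow$(iii).} I would use a nonexistence argument built from integral estimates with test functions.

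First, derive a comparison lower bound $u(x)\geq c|x|^{-(N-s)/(s-1)}$ for $|x|\geq 2$.
\begin{itemize}
\item Near infinity the gradient of a solution should be small. There \eqref{cong} makes $|\nabla u|^{m-2}g(|\nabla u|)$ of lower order than $|\nabla u|^{s-2}$, so the operator $\Lg-\Delta_s$ behaves like $-\Delta_s$ (with a sign flip, hence the role of \eqref{cong1}).
\item Condition \eqref{cong1} ensures that $t\mapsto t^{m-1}g(t)$ is nondecreasing. Hence the radial operator is monotone and a weak comparison principle holds.
\item Compare $u$ on annuli $B_R\setminus \overline B_2$ with the radial $s$-harmonic-type profile $c(|x|^{-(N-s)/(s-1)}-R^{-(N-s)/(s-1)})$, which is a subsolution for small $c$, and let $R\to\infty$.
\end{itemize}

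Next, exploit the convolution term.
\begin{itemize}
\item For $|x|$ large, $|x|^{-\alpha}*u^p\geq c|x|^{-\alpha}\int_{B_{|x|}\setminus B_1}u^p\,dy$. Combined with the lower bound this gives a lower bound on the right-hand side of the form $c|x|^{-\alpha}u^q$ times a positive constant.
\item Condition \eqref{c1}, together with the lower bound on $u$, forces $p>(s-1)(N-\alpha)/(N-s)$. Otherwise $\int u^p|y|^{-\alpha}=\infty$.
\end{itemize}

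Then test \eqref{c2} with $\phi=u^{-\gamma}\psi_R^k$, where $\gamma\in(0,s-1)$ small and $\psi_R$ is a cutoff supported in $B_{2R}\setminus B_{R}$. Use Young's inequality and $m>s$ to absorb the $\Lg$ part at small gradients. This yields the standard estimate
$$\int_{B_{R}\setminus B_{R/2}}(|x|^{-\alpha}*u^p)u^{q-\gamma}\leq CR^{N-s-\gamma\cdot 0}\cdots.$$
Iterate it with the lower bounds, separately using $q$ alone (to get $q>(s-1)(N-\alpha)/(N-s)$) and $p+q$ (to get $p+q>(s-1)(2N-\alpha)/(N-s)$).

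\textbf{(iii)$\Rightarrow$(ii).} I would use the sub and supersolution method adapted to the convolution term.
\begin{itemize}
\item Take the supersolution $\overline u=A|x|^{-\beta}$ with $\beta$ slightly larger than $(N-s)/(s-1)$, chosen so that $\beta p>N-\alpha$, $\beta(p+q)>2N-\alpha$ and $\beta q$ large. Conditions \eqref{pq} give room for such a $\beta$. Then $|x|^{-\alpha}*\overline u^p\simeq|x|^{-\alpha}$, and $\overline u$ satisfies $\Lg\overline u-\Delta_s\overline u\geq (|x|^{-\alpha}*\overline u^p)\overline u^q$ for small $A$. The hypotheses $\min\{p,q\}>m-1$ and $q\geq\alpha-1$ make the right-hand side of higher order in $A$ than both operator parts, with the $\Lg$ part dominated since gradients are small.
\item Take the subsolution $\underline u=\varepsilon\,\overline u$ truncated, or a radial $s$-harmonic-like function.
\item Solve on annuli $B_n\setminus\overline B_1$ by monotone iteration. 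The convolution is monotone in $u$, so the map $u\mapsto$ solution of $\Lg w-\Delta_s w=(|x|^{-\alpha}*u^p)u^q$ preserves order between the sub- and supersolution. Pass to the limit using $C^{1,\theta}_{loc}$ estimates.
\end{itemize}

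The main obstacle I expect is the nonexistence step. The mixed sign operator $\Lg-\Delta_s$ is not coercive globally, so one must first establish that $|\nabla u|\to0$ (or otherwise control large gradients) in order to treat it as a perturbation of $-\Delta_s$. I would try a radial averaging argument first to obtain this.
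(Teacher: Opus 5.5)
Your (ii)$\Rightarrow$(i), your comparison lower bound $u\ge c|x|^{-(N-s)/(s-1)}$ via the profile $c(|x|^{-(N-s)/(s-1)}-R^{-(N-s)/(s-1)})$, and your derivation of the condition on $p$ from \eqref{c1} all agree with the paper. There are, however, two genuine gaps.

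\textbf{The supersolution in (iii)$\Rightarrow$(ii) fails.} Write $\beta_0=\frac{N-s}{s-1}$. Take $\overline u=A|x|^{-\beta}$ and $t=A\beta r^{-\beta-1}$. Then $-\Delta_s\overline u=\frac{t^{s-1}}{r}\{N-1-(\beta+1)(s-1)\}$, which is negative for $\beta>\beta_0$. For small $t$, \eqref{cong} and $m>s$ also give $\Lg\overline u=\frac{t^{m-1}g(t)}{r}\{N-1-(\beta+1)(m-1+tg'(t)/g(t))\}<0$. Hence $\Lg\overline u-\Delta_s\overline u<0$, and $\overline u$ cannot dominate a positive right-hand side. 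It also violates the lower bound $u\ge c|x|^{-\beta_0}$ that you yourself proved for every positive supersolution. A sanity check shows the same thing: your requirements on $\beta$ hold for every large $\beta$ without using \eqref{pq}, so the construction would contradict the nonexistence half.

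The paper instead takes $\gamma$ slightly \emph{below} $\beta_0$, subject to
\[
p\gamma>N-\alpha,\qquad (q-s+1)\gamma>s-\alpha,\qquad (p+q-s+1)\gamma>N-\alpha+s,\qquad p\gamma\neq N.
\]
These are exactly \eqref{pq} evaluated at $\gamma=\beta_0$, and the strictness of \eqref{pq} is what leaves room to lower $\gamma$. Then $-\Delta_s\overline u\ge C\kappa^{s-1}|x|^{-(\gamma+1)(s-1)-1}$ absorbs $\Lg\overline u$ and the convolution term. By Lemma \ref{lbas}(ii), that term is $\lesssim|x|^{N-\alpha-(p+q)\gamma}$ when $p\gamma<N$; it is not $\simeq|x|^{-\alpha}$ in that case. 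With this $\overline u$, your option $\underline u=\varepsilon\overline u$ is again a strict supersolution, so the subsolution has to be $c|x|^{-\beta_0}$.

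\textbf{Strictness in (i)$\Rightarrow$(iii) is missing.} The test-function estimate combined with $u\ge c|x|^{-\beta_0}$ is scale invariant. At $p+q=\frac{(s-1)(2N-\alpha)}{N-s}$, or at $q=\frac{(s-1)(N-\alpha)}{N-s}$, the powers of $R$ on the two sides coincide, so you only obtain non-strict inequalities. ``Iterating'' does not change this by itself.

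The missing idea is the paper's logarithmic improvement. In each critical case the right-hand side is $\ge c|x|^{-N}$: this follows from Lemma \ref{lbas}(i) in the $p+q$ case, and from $|x|^{-\alpha}*u^p\ge c|x|^{-\alpha}$ in the $q$ case. One then compares with $v=c|x|^{-\beta_0}\log^\theta|x|$, which satisfies $\Lg v-\Delta_s v\le C|x|^{-N}$, to get $u\ge c|x|^{-\beta_0}\log^\theta|x|$. Inserting this into the annular integral estimate produces an unbounded $\log R$ factor, which is a contradiction.

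Your displayed estimate with ``$\cdots$'' also hides where the hypotheses enter. Note that $\Lg-\Delta_s=-{\rm div}\big((|\nabla u|^{m-2}g(|\nabla u|)+|\nabla u|^{s-2})\nabla u\big)$ is not mixed-sign; that is $(P^+)$. So no decay of $\nabla u$ is needed, and the $\Lg$-part enters with a good sign. The paper tests with $u^{1-s}\phi^\lambda$, which makes the $s$-contribution exactly $CR^{N-s}$. Young's inequality then leaves a term $R^{-m}\int u^{m-s}\phi^{\lambda-m}$, controlled by H\"older against the left-hand side together with Lemma \ref{lh}. That is precisely where $\min\{p,q\}>m-1$ and $q\ge\alpha-1$ are used, not in the construction.
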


Theorem \ref{tm2} provides a connection between the inequality $(P^-)$ and the corresponding equation \eqref{eqpq} and states that the existence of a solution to both $(P^-)$ and \eqref{eqpq} requires the same conditions \eqref{pq} on the data. In order to study the equation \eqref{eqpq} we devise a new sub and supersolution methodsuitable for the Orlicz spaces which is different from that provided in \cite{F18}. From Theorem \ref{tm2} the following result follows immediately.

\begin{corollary}\label{cor1}
Under the same conditions as in Theorem \ref{tm2}, the following statements are equivalent:
\begin{enumerate}
\item[(i)] The equation 
$$
-\Delta_m u-\Delta_s u=(|x|^{-\alpha}*u^p)u^q \quad\mbox{ in }\R^N\setminus \overline B_1,
$$
has a positive $C^1$-solution;
\item[(ii)] The equation 
$$
-H_m u-\Delta_s u=(|x|^{-\alpha}*u^p)u^q \quad\mbox{ in }\R^N\setminus \overline B_1,
$$
where $m\geq 2$ and $H_m$ is the operator in \eqref{mcv}, 
has a positive $C^1$-solution;
\item[(iii)] The exponents $p$ and $q$ satisfy \eqref{pq}. 
\end{enumerate}

\end{corollary}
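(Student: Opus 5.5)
The corollary follows from Theorem \ref{tm2} once we check that each of the two model choices of $g$ meets its hypotheses. Under the standing assumptions $m>s>1$, $s<N$, $q\geq \alpha-1$ and $\min\{p,q\}>m-1$, Theorem \ref{tm2} says that the equation \eqref{eqpq} has a positive $C^1$-solution if and only if \eqref{pq} holds. Condition \eqref{pq} involves only $p,q,\alpha,s,N$ and not $g$. So the plan is to apply the equivalence (ii)$\Leftrightarrow$(iii) of Theorem \ref{tm2} twice: once with $g\equiv 1$, giving (i)$\Leftrightarrow$(iii), and once with $g(t)=(1+t^m)^{-1/2}$, giving (ii)$\Leftrightarrow$(iii). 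Chaining the two gives (i)$\Leftrightarrow$(ii).

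First, $g\equiv 1$. It is $C^1(0,\infty)\cap C[0,\infty)$, positive and non-increasing. Since $g'\equiv 0$, we get $tg'(t)/g(t)=0$, so \eqref{cong} holds. Also $tg'/g+m-1=m-1>0$, so \eqref{cong1} holds. With this $g$, $\Lg u=-\Delta_m u$, and Theorem \ref{tm2} yields (i)$\Leftrightarrow$(iii).

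Second, $g(t)=(1+t^m)^{-1/2}$ with $m\geq 2$. It is positive, continuous on $[0,\infty)$, $C^1$ on $(0,\infty)$, and non-increasing. A direct computation gives
$$
\frac{tg'(t)}{g(t)}=-\frac{m}{2}\,\frac{t^m}{1+t^m},
$$
which tends to $0$ as $t\to0^+$; this is \eqref{cong}. The same expression is bounded below by $-m/2$, so
$$
\frac{tg'(t)}{g(t)}+m-1\geq \frac m2-1\geq 0,
$$
using $m\geq 2$; this is \eqref{cong1}. With this $g$, $\Lg u=-H_m u$, and Theorem \ref{tm2} yields (ii)$\Leftrightarrow$(iii).

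The only step needing care is the verification of \eqref{cong1} for the mean curvature case, and it is exactly where the restriction $m\geq2$ is used. Everything else is a direct invocation of Theorem \ref{tm2}.
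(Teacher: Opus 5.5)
Your proposal is correct and follows the paper's route: the paper deduces the corollary immediately from Theorem \ref{tm2}, after remarking that $g\equiv 1$ and $g(t)=(1+t^m)^{-1/2}$ with $m\geq 2$ satisfy \eqref{cong} and \eqref{cong1}. Your explicit computation $tg'(t)/g(t)=-\frac{m}{2}\frac{t^m}{1+t^m}$ supplies the details of that ``easily check'' step and correctly identifies that $m\geq 2$ is needed for \eqref{cong1}.
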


The remaining of this article is organised as follows. Section 2 contains some preliminary results related to integral estimates and properties of the operator $\Lg-\Delta_s$. The proofs of the above main results are given in sections 3, 4 and 5 respectively. Throughout this work, by $C, C_1, C_2, c_1, c_2, \dots, $ we denote generic constants whose value may change on each occurrence. 

\section{Preliminary results}

In this section we gather some preliminary results related to the operator $\Lg u-\Delta_s u$. We start with the following elementary result. 
\begin{lemma}\label{lh}
Let $\kappa\geq 1>a>0$ and $h:[0, \infty)\to \R$, $h(t)=t^\kappa-Dt^a-E$. Suppose there exists $c>0$ such that 
\begin{equation}\label{condh}
E^{1-\frac{a}{\kappa}}\geq cD.
\end{equation}
Then, the equation $h(t)=0$ has a unique solution $t_0>0$ and $h(t)<0$ for all $0<t<t_0$, while $h(t)>0$ for all $t>t_0$. Furthermore, there exists $\lambda>1$ depending on $c$ only such that $E<t_0<\lambda E^{\frac{1}{\kappa}}$.
\end{lemma}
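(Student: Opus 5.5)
The plan is to reduce everything to the monotonicity of an auxiliary function and then test $h$ at two explicit points. Throughout I take $D,E>0$, as is implicit in \eqref{condh}.

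\emph{Existence, uniqueness and sign pattern.} For $t>0$, write $h(t)=t^a\varphi(t)$ with
$$
\varphi(t)=t^{\kappa-a}-D-Et^{-a}.
$$
Since $\kappa-a>0$ and $a>0$, $\varphi$ is strictly increasing on $(0,\infty)$. Moreover $\varphi(t)\to-\infty$ as $t\to0^+$, because $E>0$, and $\varphi(t)\to+\infty$ as $t\to\infty$. Hence $\varphi$ has exactly one zero $t_0>0$, with $\varphi<0$ on $(0,t_0)$ and $\varphi>0$ on $(t_0,\infty)$. As $t^a>0$, the same statements hold for $h$. This gives the first part of the lemma without using \eqref{condh}.

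\emph{Lower bound.} By the sign pattern, it suffices to find a point where $h<0$. At $t=E^{1/\kappa}$ we get
$$
h(E^{1/\kappa})=-DE^{a/\kappa}<0,
$$
so $t_0>E^{1/\kappa}$. When $\kappa=1$ this is exactly $t_0>E$. For $\kappa>1$ one needs $E\le 1$ to deduce $t_0>E$ from this. Indeed, for $\kappa>1$ and large $E$, $h(E)=E^\kappa-DE^a-E$ can be positive, so $t_0>E$ cannot hold in full generality. I would therefore record the bound as $t_0>E^{1/\kappa}$, which gives $t_0>E$ whenever $\kappa=1$ or $E\le1$, and check that this is the regime in which the lemma is used later.

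\emph{Upper bound.} This is the only step that uses \eqref{condh}. Rearranging \eqref{condh} gives $DE^{a/\kappa}\le E/c$. For $\lambda>1$,
$$
h(\lambda E^{1/\kappa})=\lambda^\kappa E-D\lambda^aE^{a/\kappa}-E\ \ge\ E\Big(\lambda^\kappa-1-\frac{\lambda^a}{c}\Big).
$$
Since $\kappa\ge1>a$, the bracket tends to $+\infty$ as $\lambda\to\infty$. So there is $\lambda>1$, depending only on $c$ (and on the fixed exponents $\kappa,a$), with $\lambda^\kappa-1-\lambda^a/c>0$. Then $h(\lambda E^{1/\kappa})>0$, and the sign pattern gives $t_0<\lambda E^{1/\kappa}$.

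The choice of $\lambda$ independent of $D$ and $E$ is the main point, and the scaling in \eqref{condh} makes it work. The only delicate issue is the lower bound as stated, handled above.
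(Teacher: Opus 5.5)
Your proof is correct. The upper bound is the paper's argument exactly: the same test point $\lambda E^{1/\kappa}$, with $\lambda$ chosen so that $\lambda^\kappa-1>\lambda^a/c$. For existence, uniqueness and the sign pattern, the paper computes $h'(t)=t^{a-1}(\kappa t^{\kappa-a}-aD)$, observes that $h'$ vanishes exactly once, and uses $h(0)=-E<0$. Your factorization $h=t^a\varphi$ with $\varphi$ strictly increasing reaches the same conclusion and gives the sign pattern more directly.

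On the lower bound you are right and the paper is not. Its proof asserts $h(E)<0$. This holds for $\kappa=1$, where $h(E)=-DE^a$, but it fails for $\kappa>1$ and large $E$. Take $\kappa=2$, $a=\tfrac12$, $D=c=1$, $E=100$. Then \eqref{condh} holds, since $100^{3/4}\geq 1$, but $h(100)=9890>0$, so $t_0<E$. In fact $t_0\approx 10.2$, which is consistent with your bound $t_0>E^{1/\kappa}=10$.

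This also settles the point you left open. In Lemma \ref{lap} the lemma is applied with $\kappa=2$ and $E=CR^{N+\alpha-s}\to\infty$, which is exactly the regime where $E<t_0$ fails. However, there, and in Step 3 of the proof of Theorem \ref{tm2} (where $\kappa=1$), only the sign pattern and the upper bound are used, namely $h(t)\le 0\Rightarrow t\le t_0<\lambda E^{1/\kappa}$. Your corrected statement $E^{1/\kappa}<t_0<\lambda E^{1/\kappa}$ therefore gives everything the paper needs.
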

\begin{proof}
We have $h'(t)=t^{a-1}(\kappa t^{\kappa-a}-aD)$ and there exists a unique solution of the equation $h'(t)=0$. Since $h(0)=-E<0$, it also follows that the equation $h(t)=0$ has a unique solution $t_0>0$. We note that $h(E)<0$ while for large $\lambda>1$ we have
$$
\frac{\lambda^\kappa-1}{\lambda^a}>\frac{1}{c},$$
and thus, by \eqref{condh} one has
$$
h(\lambda E^{\frac{1}{\kappa}})=\lambda^a E^{\frac{a}{\kappa}} \Big(\frac{\lambda^\kappa-1}{\lambda^a}E^{1-\frac{a}{\kappa}}-D  \Big)>0.
$$
This shows that the unique solution $t_0>0$ of the equation $h(t)=0$ satisfies $E<t_0<\lambda E^{\frac{1}{\kappa}}$, which concludes the proof.
\end{proof}

We shall use the following result from \cite[Lemma 3.7]{GKS20} (see also \cite{Gbook22, GT16book} for further results).
\begin{lemma}\label{lbas}
Let $\alpha\in (0,N)$ and $\rho\geq  1/2$.

\begin{enumerate}
\item[\rm (i)]
If $f(x)\geq c|x|^{-\beta}$ in $\R^N\setminus B_{1}$ for some $c,\beta>0$, then
$$ \left\{
\begin{aligned}
&  |x|^{-\alpha}*f=\infty && \quad\mbox{ if } \; \beta \leq N-\alpha \\
&  |x|^{-\alpha}*f\geq C|x|^{N-\alpha-\beta} && \quad\mbox{ if } \; \beta> N-\alpha
\end{aligned}
\right. \quad\mbox{ in }\; \R^N\setminus B_{1},
$$
for some constant $C>0$.

\item[\rm (ii)]
If $f\in C(\R^N\setminus B_\rho)$ and $\displaystyle f(x)\leq  c|x|^{-\beta}$ in $\R^N\setminus B_{\rho}$ for some $c>0$ and $\beta>N-\alpha>0$, then for some constant $C>0$ we have
$$ 
\int\limits_{\R^N\setminus B_{\rho}}\frac{f(y)}{|x-y|^{\alpha}} dy \leq 
\left\{
\begin{aligned}
&C |x|^{N-\alpha-\beta} &&\quad\mbox{ if }\; N-\alpha<\beta<N\\
&C|x|^{-\alpha}\log|x|   &&\quad\mbox{ if }\; \beta=N\\
&C|x|^{-\alpha} &&\quad\mbox{ if }\; \beta>N
\end{aligned}
\right.
\quad\mbox{in }\R^N\setminus \overline B_{2\rho}.
$$
\end{enumerate}
\end{lemma}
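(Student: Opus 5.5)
We prove the two parts separately, in both cases by splitting the region of integration according to the size of $|y|$ relative to $|x|$. On each piece one of the two factors $|x-y|^{-\alpha}$ and $|y|^{-\beta}$ is comparable to a pure power of $|x|$ or of $|y|$. No earlier result of the paper is needed; only elementary radial integrals are used.

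\emph{Part (i).} Fix $x\in\R^N\setminus B_1$ and discard everything except the region $\{|y|\geq 2|x|\}$. There $|x-y|\leq |x|+|y|\leq \frac32|y|$, so $|x-y|^{-\alpha}\geq (2/3)^\alpha |y|^{-\alpha}$. Hence
$$
|x|^{-\alpha}*f\geq c\Big(\frac23\Big)^\alpha\int\limits_{|y|\geq 2|x|}|y|^{-\alpha-\beta}\,dy
= C\int_{2|x|}^\infty r^{N-1-\alpha-\beta}\,dr .
$$
The last integral diverges precisely when $\beta\leq N-\alpha$. When $\beta>N-\alpha$ it equals $C'|x|^{N-\alpha-\beta}$. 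This gives both alternatives in (i).

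\emph{Part (ii).} Fix $|x|\geq 2\rho$; continuity of $f$ is used only to guarantee measurability. Split $\R^N\setminus B_\rho$ into three sets:
$$
A_1=\{\rho\leq |y|\leq |x|/2\},\qquad A_2=\{|x|/2<|y|<2|x|\},\qquad A_3=\{|y|\geq 2|x|\}.
$$

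On $A_1$ we have $|x-y|\geq |x|/2$, so the contribution is at most
$$
C|x|^{-\alpha}\int_\rho^{|x|/2} r^{N-1-\beta}\,dr .
$$
This integral gives the three regimes:
\begin{itemize}
\item if $\beta<N$, it is bounded by $C|x|^{N-\beta}$;
\item if $\beta=N$, it equals $\log\frac{|x|}{2\rho}\leq \log|x|$, which uses $\rho\geq 1/2$;
\item if $\beta>N$, it is bounded by $C\rho^{N-\beta}$, a constant.
\end{itemize}

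On $A_2$ we use $f(y)\leq c2^\beta|x|^{-\beta}$ and $A_2\subset\{|x-y|<3|x|\}$. The contribution is then at most
$$
C|x|^{-\beta}\int\limits_{|z|<3|x|}|z|^{-\alpha}\,dz=C|x|^{N-\alpha-\beta},
$$
where $\alpha<N$ ensures local integrability.

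On $A_3$ we have $|x-y|\geq |y|/2$. The contribution is at most $C\int_{2|x|}^\infty r^{N-1-\alpha-\beta}\,dr=C|x|^{N-\alpha-\beta}$, which is finite because $\beta>N-\alpha$.

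Finally compare $|x|^{N-\alpha-\beta}$ with the claimed bounds. If $\beta\geq N$, then $|x|^{N-\alpha-\beta}\leq |x|^{-\alpha}$ since $|x|\geq 1$. In the case $\beta=N$, however, $|x|^{-\alpha}$ is not dominated by $|x|^{-\alpha}\log|x|$ near $|x|=1$. This is harmless, because on $A_2\cup A_3$ with $\beta=N$ the bound is exactly $C|x|^{-\alpha}$, and the $A_1$ contribution is $\leq C|x|^{-\alpha}\log|x|$. So in that case the bound should be read as $C|x|^{-\alpha}(1+\log|x|)$, or one takes $|x|$ bounded away from $1$. This endpoint bookkeeping near $|x|=2\rho$ is the only delicate point; everything else consists of routine radial integrals. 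Summing the three contributions yields (ii).
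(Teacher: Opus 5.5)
The paper does not prove this lemma; it imports it from \cite[Lemma 3.7]{GKS20}. Your argument is therefore a self-contained replacement for a citation rather than an alternative to an in-paper proof. It is the standard splitting into $|y|\le |x|/2$, $|x|/2<|y|<2|x|$ and $|y|\ge 2|x|$, and every estimate in it is correct. In part (i) the region $\{|y|\ge 2|x|\}$ lies inside $\R^N\setminus B_1$ because $|x|\ge 1$, which is what allows you to discard the rest of the domain.

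Your endpoint remark is also correct, and it identifies a defect in the statement rather than in your proof. Take $\rho=1/2$, $\beta=N$ and $f(y)=|y|^{-N}$. As $|x|\to 1^+$ the left-hand side tends to a positive limit, while $C|x|^{-\alpha}\log|x|\to 0$. So for $\rho=1/2$ the bound holds only in the form $C|x|^{-\alpha}(1+\log|x|)$.

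For $\rho>1/2$ the statement is fine as written. There $|x|>2\rho>1$, so $1\le \log|x|/\log(2\rho)$, and the extra $C|x|^{-\alpha}$ from $A_2\cup A_3$ is absorbed with a $\rho$-dependent constant.

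This gap is harmless for the paper, because the case $\beta=N$ is never used:
\begin{itemize}
\item Theorem~\ref{tp} applies (ii) with $\rho=1/2$ but with $\beta=p\gamma>N$.
\item The supersolution in Theorem~\ref{tm2} is built with $p\gamma\neq N$, precisely to avoid the logarithmic case.
\end{itemize}
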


The operator $\Lg-\Delta_s$ enjoys the following comparison principle in bounded domains.

\begin{proposition}\label{cp}
Assume $g$ satisfies \eqref{cong1} and let $\Omega\subset \R^N$ be a bounded domain. Suppose $u,v\in W^{1,1}(\Omega)\cap C(\Omega)$ satisfy 
$$
\begin{aligned}
\Lg u-\Delta_s u &\geq \Lg v -\Delta_s v \quad\mbox{ in } \Omega, \\
u&\geq v \quad\mbox{ on } \partial\Omega.
\end{aligned}
$$ 
Then $u\geq v$ in $\Omega$.
\end{proposition}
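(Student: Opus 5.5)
The plan is to rewrite the operator in divergence form with a single strictly monotone vector field and then run the standard weak comparison argument with the test function $(v-u-\varepsilon)^+$.

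\textbf{Step 1: divergence form.} Write
$$\Lg w-\Delta_s w=-{\rm div}\,\mathcal A(\nabla w),\qquad \mathcal A(\xi)=a(|\xi|)\frac{\xi}{|\xi|},\qquad a(t)=t^{m-1}g(t)+t^{s-1},$$
with $\mathcal A(0)=0$.

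\textbf{Step 2: strict monotonicity of $a$.} For $t>0$,
$$\frac{d}{dt}\big(t^{m-1}g(t)\big)=t^{m-2}g(t)\Big(\frac{tg'(t)}{g(t)}+m-1\Big)\ge 0$$
by \eqref{cong1}. Since $t\mapsto t^{s-1}$ is strictly increasing, $a$ is strictly increasing on $[0,\infty)$, with $a(0)=0$.

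\textbf{Step 3: strict monotonicity of $\mathcal A$.} Using $\xi\cdot\eta\le|\xi||\eta|$,
$$(\mathcal A(\xi)-\mathcal A(\eta))\cdot(\xi-\eta)\ \ge\ a(|\xi|)|\xi|+a(|\eta|)|\eta|-\big(a(|\xi|)+a(|\eta|)\big)|\xi||\eta| \Big/ \max\{\cdot\}\ ,$$
or more cleanly
$$(\mathcal A(\xi)-\mathcal A(\eta))\cdot(\xi-\eta)\ \ge\ \big(a(|\xi|)-a(|\eta|)\big)\big(|\xi|-|\eta|\big)\ \ge 0 .$$
Suppose equality holds. Then $|\xi|=|\eta|$, because $a$ is strictly increasing. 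In addition, equality must hold in Cauchy–Schwarz, so $\xi$ and $\eta$ are parallel with the same orientation. Together these give $\xi=\eta$. Hence $\mathcal A$ is strictly monotone.

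\textbf{Step 4: the test function.} Fix $\varepsilon>0$ and set $\phi=(v-u-\varepsilon)^+$. The boundary inequality $u\ge v$ on $\partial\Omega$ is understood in the sense $\liminf(u-v)\ge0$ at boundary points. Together with continuity in $\Omega$, this gives that $\{v-u\ge\varepsilon\}$ is a compact subset of $\Omega$, so $\phi$ has compact support in $\Omega$. Moreover $\phi\in W^{1,1}$, $\phi\ge0$, $\phi$ is bounded, and $\nabla\phi=(\nabla v-\nabla u)\chi_{\{v-u>\varepsilon\}}$.

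By density (mollifying $\phi$), $\phi$ is admissible in the weak formulations of both inequalities. Subtracting them gives
$$\int_{\{v-u>\varepsilon\}}\big(\mathcal A(\nabla v)-\mathcal A(\nabla u)\big)\cdot(\nabla v-\nabla u)\le 0 .$$

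\textbf{Step 5: conclusion.} By Step 3 the integrand is nonnegative, so it vanishes a.e. on $\{v-u>\varepsilon\}$. Hence $\nabla\phi=0$ a.e. in $\Omega$. A $W^{1,1}$ function with compact support and zero gradient is constant on each component of $\Omega$, and that constant must be $0$ near the boundary. Therefore $v\le u+\varepsilon$ in $\Omega$, and letting $\varepsilon\to0$ yields $u\ge v$.

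\textbf{Main obstacle.} The delicate point is justifying $\phi$ as a test function under the weak hypotheses $u,v\in W^{1,1}(\Omega)\cap C(\Omega)$. This requires $\mathcal A(\nabla u),\mathcal A(\nabla v)\in L^1_{loc}$ (implicit in the weak formulation) and an approximation of $\phi$ by smooth nonnegative compactly supported functions. I would handle it by working on a neighbourhood $\Omega'\Subset\Omega$ of ${\rm supp}\,\phi$ and passing to the limit with dominated convergence. The algebraic content, namely strict monotonicity via \eqref{cong1}, is straightforward.
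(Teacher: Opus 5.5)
Your proof is correct and follows essentially the paper's route. The paper likewise writes the operator as $-{\rm div}[A(|\nabla w|)\nabla w]$, uses \eqref{cong1} to get $t\mapsto tA(t)$ increasing, and tests with $\psi(v-\varepsilon-u)$ for a nondecreasing $\psi$ vanishing on $(-\infty,0]$; your $(v-u-\varepsilon)^+$ is the special case $\psi(t)=t^+$, and the same splitting of $(A(|\nabla v|)\nabla v-A(|\nabla u|)\nabla u)\cdot(\nabla v-\nabla u)$ forces $\nabla u=\nabla v$ on $\{u<v-\varepsilon\}$ before letting $\varepsilon\to0$. The admissibility issue you flag is glossed over in the paper too, which simply invokes the divergence theorem. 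It is harmless in the paper's applications, where $u\in W^{1,m}_{loc}$ and the comparison function is smooth. Under mere $W^{1,1}\cap C$ hypotheses, however, dominated convergence alone would not close it, since $\mathcal A(\nabla u)\cdot\nabla\phi$ need not even be integrable.
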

\begin{proof} Let us first note that
$$
\Lg w-\Delta_s w=-{\rm div}\big[A(|\nabla w|)\nabla w  \big],
$$
where $A(t)=t^{m-2}g(t)+t^{s-2}$. Furthermore, from \eqref{cong1} we have $t\longmapsto tA(t)$ is increasing.

Suppose $u\geq v$ does not hold in $\Omega$. Then, for all $\varepsilon>0$ small,  the set $\Omega_\varepsilon=\{x\in \Omega:u(x)<v(x)-\varepsilon\}$ is nonempty. Obviously, $\overline \Omega_\varepsilon\subset \Omega$ and by the continuity of $u$ and $v$ it follows that $u=v-\varepsilon$ on $\partial \Omega_\varepsilon$.

Let $\psi:\R\to [0,\infty)$ be a nondecreasing function with the properties 
$$\psi \equiv 0\quad\mbox{ on }(-\infty, 0]\quad\mbox{ and }\quad \psi'>0 \quad\mbox{ on }(0,\infty).
$$ 
Then
$$
-\int_{\Omega_\varepsilon} {\rm div}\Big(A(|\nabla v|)\nabla v -A(|\nabla u|)\nabla u \Big)\psi(v-\varepsilon-u) dx\leq  0.
$$
By the divergence theorem \index{Theorem!divergence} this implies 
\begin{equation}\label{AA1}
\int_{\Omega_\varepsilon} \Big(A(|\nabla v|)\nabla v -A(|\nabla u|)\nabla u  \Big) \cdot(\nabla v-\nabla u)\psi'(v-\varepsilon-u)dx\leq 0.
\end{equation}
On the other hand, we note that
\begin{equation}\label{AA2}
\begin{aligned}
\Big(A(|\nabla v|)\nabla v -A(|\nabla u|)\nabla u  \Big) \cdot  (\nabla v&-\nabla u)\\
=&
\Big(A(|\nabla v|)|\nabla v| -A(|\nabla u|) |\nabla u|  \Big) (|\nabla v|-|\nabla u|)\\
&+\Big(A(|\nabla v|) +A(|\nabla u|)  \Big) (|\nabla v||\nabla u|-\nabla v\cdot \nabla u)\\
\geq & 0.
\end{aligned}
\end{equation}
Thus, \eqref{AA1} and \eqref{AA2} yield $\nabla u=\nabla v$ in $\Omega_\varepsilon$. This means $u-v=c$ in $\Omega_\varepsilon$, which is impossible since $u-v=\varepsilon$ on $\partial \Omega_\varepsilon$ and $u<v-\varepsilon$ in $\Omega_\varepsilon$.
Hence, for all $\varepsilon>0$ small we have $u\geq v-\varepsilon$ in $\Omega$. The conclusion follows now by letting $\varepsilon\to 0$.
\end{proof}

\section{Proof of Theorem \ref{tp}}

Let us first note that if $u=u(r)$ is a radially symmetric function, then
\begin{equation}\label{sv}
-\Delta_s u=\frac{|u'(r)|^{s-1}}{r}\Big\{\frac{ru''(r)}{u'(r)}(s-1)+N-1\Big\},
\end{equation}
and 
\begin{equation}\label{gv}
\Lg u=\frac{t^{m-1}g(t)}{r}\Big\{\frac{ru''(r)}{u'(r)}\Big(m-1+\frac{tg'(t)}{g(t)}\Big)+N-1\Big\},
\end{equation}
where $t=|u'(r)|$. In particular, if $u(x)=\kappa |x|^{-\gamma}$, then 
\begin{equation}\label{sl}
-\Delta_s u(x)=(\kappa \gamma)^{s-1}|x|^{-(\gamma+1)(s-1)-1}\big\{-(\gamma+1)(s-1)+N-1\big\}
\end{equation}
and
\begin{equation}\label{gl}
\Lg u(x)=-(\kappa \gamma)^{m-1}|x|^{-(\gamma+1)(m-1)-1}g(t)\Big\{(\gamma+1)\Big(m-1+\frac{tg'(t)}{g(t)}\Big)+N-1\Big\},
\end{equation}
where $t=\kappa \gamma|x|^{-\gamma-1}$. We take
$$
\gamma>\max\left\{\frac{N}{p}, \frac{N-s}{s-1}, \frac{s-\alpha}{q-s+1}  \right\}.
$$

Thus, for some positive constant $C_1>0$ we estimate
$$
\Delta_s u\geq C_1\kappa^{s-1} |x|^{-(\gamma+1)(s-1)-1}\quad\mbox{ in }\Ro.
$$
We have $t=\kappa \gamma|x|^{-\gamma-1}\to 0$ as $|x|\to \infty$. Thus, for $\kappa>0$ small and $|x|>1$, we have that $t$ is close to zero and thus $g(t)$ is close to $g(0)>0$. Using \eqref{cong} and \eqref{gl} we find
$$
\Lg u\geq -C_2 \kappa^{m-1} |x|^{-(\gamma+1)(m-1)-1} \quad\mbox{ in }\Ro,
$$
for some $C_2>0$, and thus
\begin{equation}\label{d1}
\begin{aligned}
\Lg u+\Delta_s u &\geq C_1\kappa^{s-1} |x|^{-(\gamma+1)(s-1)-1}-C_2 \kappa^{m-1} |x|^{-(\gamma+1)(m-1)-1}\\
&\geq C \kappa^{s-1} |x|^{-(\gamma+1)(s-1)-1} \quad\mbox{ in }\Ro,
\end{aligned}
\end{equation}
provided $\kappa>0$ is sufficiently small. On the other hand, by 
Lemma \ref{lbas}(ii) with $f=u^p$, $\rho=1/2$, and $\beta=p\gamma>N$, we see that
$$
\int_{\R^N\setminus B_1}\frac{u^p(y)}{|x-y|^\alpha} dy\leq C_0\kappa^p |x|^{-\alpha}\quad\mbox{ for all }x\in \Ro,
$$
where $C_0>0$ is independent of $\kappa$.
Hence,
\begin{equation}\label{d2}
(|x|^{-\alpha}*u^p)u^q\leq C_0\kappa^{p+q} |x|^{-\alpha-q\gamma} \quad\mbox{ for all }x\in \Ro.
\end{equation}
Since $\gamma>\frac{s-\alpha}{q-(s-1)}$, from \eqref{d1} and \eqref{d2} we may select $\kappa>0$ sufficiently small and deduce that $u=\kappa|x|^{-\gamma}$ is a solution of $(P^+)$ in $\Ro$. 
\qed

\section{Proof of Theorem \ref{tm1}}

Suppose by contradiction that $(P^-)$ has a positive solution $u$ in $\Ro$.

From \eqref{cong} there exists $\tilde t>0$ such that 
\begin{equation}\label{gg}
\frac{tg'(t)}{g(t)}>s-m\quad\mbox{ for all }0<t<\widetilde t.
\end{equation}

Let 
\begin{equation}\label{cc}
0<c<\min\Big\{\tilde t, \min_{\partial B_{2}}u\Big\}.
\end{equation}
For any $0<\varepsilon, \gamma<1$ define $v_{\varepsilon, \gamma}(x)=c|x|^{-\gamma}-\varepsilon$. Then, for $c>0$ small, from \eqref{sl} we have $-\Delta_s v_{\varepsilon, \gamma}\leq 0$ in $\R^N\setminus B_{2}$. Also, from \eqref{gl}  we compute
$$
\Lg v_{\varepsilon, \gamma}=-\frac{t^{m-1}g(t)}{r}\Big\{(\gamma+1)\Big(m-1+\frac{tg'(t)}{g(t)}\Big)-(N-1)\Big\},
$$
where $t=|v'_{\varepsilon, \gamma}(r)|=c\gamma r^{-\gamma}$ and $r=|x|$. Further, from \eqref{cc} we obtain
$$
t\leq c<\tilde t \quad\mbox{ for all }r=|x|\geq 2.
$$
Using \eqref{gg} it follows that 
$$
(\gamma+1)\Big(m-1+\frac{tg'(t)}{g(t)}\Big)-(N-1)> (\gamma+1)(s-1)-(N-1)>0,
$$
which yields 
\begin{equation}\label{cc1}
\Lg v_{\varepsilon, \gamma}-\Delta_s v_{\varepsilon, \gamma}\leq 0\quad\mbox{ in }\R^N\setminus B_{2}.
\end{equation} 
Since $v_{\varepsilon, \gamma}(x)\to -\varepsilon$ as $|x|\to \infty$, there exists $R>2$ such that $u>v_{\varepsilon, \gamma}$ in $\R^N\setminus B_R$. In particular this yields $u\geq v_{\varepsilon, \gamma}$ on $\partial B_R$ and from \eqref{cc} one has 
$$
v_{\varepsilon, \gamma}(x)=c|x|^{-\gamma}-\varepsilon\leq c<u \quad\mbox{ on }\partial B_{2}.
$$
This shows that $u, v_{\varepsilon, \gamma}$ satisfy the conditions in Lemma \ref{cp} in $\Omega=B_R\setminus B_{2}$ and thus $u\geq v_{\varepsilon, \gamma}$ in $B_R\setminus B_{2}$. This ultimately yields $u\geq c|x|^{-\gamma}-\varepsilon$ in $\R^N\setminus B_{2}$. We now let $\varepsilon\to 0$ and then $\gamma\to 0$ to derive $u\geq c$ in $\R^N\setminus B_{2}$. This however contradicts  condition \eqref{c1} and concludes our proof (see also Lemma \ref{lbas} (i)).
\qed

\section{Proof of Theorem \ref{tm2}}

A crucial point in our approach is the following a priori result.
\begin{lemma}\label{lap}
Let $u$ be a positive solution of $(P^-)$. Then, there exists a constant $C=C(N,u,m,s,p,q)>0$ such that 
\begin{equation}\label{d20}
\int_{B_{3R}\setminus B_{2R}}u^{\frac{p+q-s+1}{2}}(x)dx \leq CR^{\frac{N+\alpha-s}{2}} \quad\mbox{ for all }R>2.
\end{equation}
\end{lemma}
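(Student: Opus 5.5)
We prove \eqref{d20} with the standard test-function method, exploiting the structure of the weak formulation \eqref{c2}. First, a lower bound for the convolution on the annulus. Since $u$ is continuous and positive, $u^p\geq c_0>0$ on some fixed compact set $K\subset \Ro$, say $K=\overline B_3\setminus B_2$. Hence for $|x|\geq 2$,
$$
|x|^{-\alpha}*u^p\geq \int_K \frac{u^p(y)}{|x-y|^\alpha}dy\geq c|x|^{-\alpha}.
$$
On $B_{3R}\setminus B_{2R}$ this gives $|x|^{-\alpha}*u^p\geq cR^{-\alpha}$. This crude bound is what produces the $R^{\alpha}$ factor inside $R^{(N+\alpha-s)/2}$.

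Next, fix a cutoff $\phi_R(x)=\phi(x/R)$ with $\phi\in C_c^\infty(B_4\setminus \overline B_1)$, $0\le\phi\le1$, $\phi\equiv1$ on $B_{3R}\setminus B_{2R}$ (after rescaling) and $|\nabla\phi_R|\le C/R$. We test \eqref{c2} with $\phi=\phi_R^k$ for $k$ large. The key structural point is that the left side of \eqref{c2} carries the \emph{plus} sign for $(P^-)$. Using $g\le g(0)$ (since $g$ is non-increasing), this yields
$$
\int (|x|^{-\alpha}*u^p)u^q\phi_R^k\leq kC\int \big(|\nabla u|^{m-1}+|\nabla u|^{s-1}\big)\phi_R^{k-1}|\nabla\phi_R|.
$$
The gradient terms must then be removed. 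The plan is to test instead with $u^{-d}\phi_R^k$ for a small $d\in(0,1)$, which is the standard Mitidieri--Pohozaev trick. This produces a favourable term $-d\int(|\nabla u|^{m}g+|\nabla u|^s)u^{-d-1}\phi_R^k$. We absorb the cross terms $|\nabla u|^{m-1}u^{-d}|\nabla \phi_R|$ and $|\nabla u|^{s-1}u^{-d}|\nabla\phi_R|$ into it by Young's inequality. This leaves
$$
R^{-\alpha}\int_{B_{3R}\setminus B_{2R}}u^{q-d}\lesssim \int u^{m-1-d}|\nabla\phi_R|^m\phi_R^{k-m}+\int u^{s-1-d}|\nabla\phi_R|^s\phi_R^{k-s}.
$$
Two points need care here. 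The $g$-term requires $g$ to be bounded below on the relevant range of gradients. Condition \eqref{cong1}, i.e.\ $t^{m-1}g(t)$ increasing, together with $g\le g(0)$, should allow us to control the $m$-phase cross term by the combination of the $m$- and $s$-phases, treating large and small gradients separately.

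The main obstacle is reaching precisely the exponent $\frac{p+q-s+1}{2}$ with power $R^{(N+\alpha-s)/2}$. The form of the target suggests a Cauchy--Schwarz step. Apply the test with weight $u^{-d}$ to get a bound on $\int u^{q-d}\phi_R^k$ via Hölder, and use the fact that the $s$-phase term dominates for large $R$ because $|\nabla\phi_R|^s R^{N}\sim R^{N-s}$ beats $R^{N-m}$. Then split $u^{(p+q-s+1)/2}=(u^{q}\cdot u^{p-s+1})^{1/2}$ and apply Cauchy--Schwarz, bounding $\int u^q\phi^k \cdot R^{-\alpha}$ by $\int u^{s-1}|\nabla\phi|^s\lesssim R^{-s}\int u^{s-1}$. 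Finally, Hölder against $u^{p}$, using \eqref{c1} in the form $\int_{B_{3R}\setminus B_{2R}}u^p\le CR^{\alpha}$, closes the estimate: $R^{-\alpha}\cdot R^{N-s}\cdot R^{\alpha}$ arithmetic gives $R^{N+\alpha-s}$ for the product, hence $R^{(N+\alpha-s)/2}$ after the square root. Since $q>m-1>s-1$ and $p>s-1$, the intermediate Hölder exponents are admissible. If the exact bookkeeping of $d$ fails, the fallback is to take $d\to0$ directly in the $\phi_R^k$ test and to use $\int|\nabla u|^{s-1}\ldots$ bounds from a Caccioppoli inequality.
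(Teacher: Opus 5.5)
\textbf{There is a genuine gap.} It sits at the step you call the main obstacle: your two inputs cannot produce the power $R^{\frac{N+\alpha-s}{2}}$. These inputs are the crude bound $|x|^{-\alpha}*u^p\ge c|x|^{-\alpha}$ (convolution restricted to a fixed compact set) and the upper bound $\int_{B_{3R}\setminus B_{2R}}u^p\le CR^{\alpha}$ from \eqref{c1}. Each is lossy.

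Suppose we even grant your bound $R^{-\alpha}\int u^q\phi_R^k\lesssim R^{-s}\int u^{s-1}$. None of your tests actually gives it: with $d=0$ there is no coercive term to absorb the gradient cross terms, and with $d>0$ you get $u^{q-d}$, $u^{s-1-d}$ and an $m$-phase remainder. Hölder with $\int u^p\le CR^{\alpha}$ then yields $\int u^q\cdot\int u^{p-s+1}\lesssim R^{N+2\alpha-s}$. Hence you only get $\int_{B_{3R}\setminus B_{2R}}u^{\frac{p+q-s+1}{2}}\lesssim R^{\frac{N+2\alpha-s}{2}}$. Your arithmetic ``$R^{-\alpha}\cdot R^{N-s}\cdot R^{\alpha}$ gives $R^{N+\alpha-s}$'' is incorrect; that product is $R^{N-s}$.

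The same extra $R^{\alpha/2}$ appears with the cleaner weight $u^{1-s}$. There the crude bound gives $\int u^{q-s+1}\lesssim R^{N+\alpha-s}$ (this is \eqref{e4}, which moreover needs $q\ge\alpha-1$), and Cauchy--Schwarz against \eqref{c1} again gives $R^{\frac{N+2\alpha-s}{2}}$. The loss is not cosmetic. In Step 2 of the proof of Theorem \ref{tm2} it would only give $p+q\ge\frac{2(s-1)(N-\alpha)}{N-s}$ instead of $\frac{(s-1)(2N-\alpha)}{N-s}$.

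\textbf{The missing idea} is to keep $\int u^p$ on the left-hand side as a factor instead of bounding it from above. For $x,y\in B_{4R}\setminus B_R$ one has $|x-y|\le 8R$, so $|x|^{-\alpha}*u^p\ge CR^{-\alpha}\int_{B_{4R}\setminus B_R}u^p$ on the support of $\phi$. Now test with $u^{1-s}\phi^\lambda$, i.e.\ $d=s-1$. Any $d>0$ gives a coercive term, so there is no need for $d<1$. With this choice the $s$-phase remainder becomes $\int|\nabla\phi|^s\phi^{\lambda-s}\le CR^{N-s}$, independent of $u$, and one obtains
\[
\Big(\int_{B_{4R}\setminus B_R}u^p\Big)\Big(\int_{B_{4R}\setminus B_R}u^{q-s+1}\phi^\lambda\Big)\le CR^{\alpha-m}\int_{B_{4R}\setminus B_R}u^{m-s}\phi^{\lambda-m}+CR^{N+\alpha-s}.
\]
Cauchy--Schwarz bounds $\big(\int u^{\tau}\phi^{\lambda}\big)^2$, with $\tau=\frac{p+q-s+1}{2}$, by the left-hand side. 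This is where the exponent in \eqref{d20} comes from.

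The $m$-phase remainder is not disposed of by the heuristic ``$R^{N-s}$ beats $R^{N-m}$'', because it carries the factor $u^{m-s}$. The paper absorbs it by Hölder against $\int u^{\tau}\phi^{\lambda}$ and closes the resulting inequality $t^2\le Dt^{a}+E$ with Lemma \ref{lh}. Alternatively, Hölder together with \eqref{c1} (using $p>m-1>m-s$) gives $R^{\alpha-m}\int u^{m-s}\le CR^{N+\alpha-s}$ directly. That is the right place to use \eqref{c1}, rather than as a substitute for the product structure.

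Finally, no lower bound on $g$ is needed. Multiply the Young inequality for $|\nabla u|^{m-1}u^{1-s}\phi^{\lambda-1}|\nabla\phi|$ by $g(|\nabla u|)$. Absorb the first term into the coercive $g$-term, and use $g\le g(0)$ in the second.
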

\begin{proof}
Let $\phi\in C^\infty_c(\Ro)$ be a standard cut-off function such that:
\begin{itemize}
\item $0\leq \phi\leq 1$ and supp$\,\phi\subset B_{4R}\setminus B_{R}$;
\item  $\phi=1$ in $B_{3R}\setminus B_{2R}$;
\item  $|\nabla \phi|\leq \frac{C}{R}$ in $\Ro$, where $C=C(N)>0$ is a constant.
\end{itemize}
We multiply the inequality $(P^-)$ by $u^{1-s}\phi^\lambda$ and obtain
$$
\int\limits_{\R^N\setminus B_1}(|x|^{-\alpha}*u^p)u^{q-s+1}\phi^\lambda\leq \int\limits_{\R^N\setminus B_1}\Big(|\nabla u|^{m-2}g(|\nabla u|)+|\nabla u|^{s-2}\Big) \nabla u\cdot \nabla \big(u^{1-s}\phi^\lambda\big),
$$
which yields
\begin{equation}\label{d3}
\begin{aligned}
\int\limits_{\R^N\setminus B_1}(|x|^{-\alpha}*u^p)& u^{q-s+1}\phi^\lambda+(s-1) \int\limits_{\R^N\setminus B_1}\Big(|\nabla u|^{m}g(|\nabla u|)+|\nabla u|^{s}\Big) u^{-s}\phi^\lambda\\
&\leq \lambda \int\limits_{\R^N\setminus B_1}  \Big(|\nabla u|^{m-1}g(|\nabla u|)+|\nabla u|^{s-1}\Big) 
u^{1-s}\phi^{\lambda-1}|\nabla \phi|.
\end{aligned}
\end{equation}
By Young's inequality we have 
$$
\begin{aligned}
|\nabla u|^{m-1}u^{1-s}\phi^{\lambda-1}|\nabla \phi|&= \Big( |\nabla u|^{m-1} u^{-\frac{s(m-1)}{m}}\phi^{\frac{\lambda(m-1)}{m}}\Big)\cdot \Big(|\nabla \phi|u^{\frac{m-s}{m}}\phi^{\frac{\lambda-m}{m}}\Big)\\
&\leq \frac{s-1}{2\lambda}|\nabla u|^{m} u^{-s}\phi^\lambda+C_1(s,m,\lambda)|\nabla \phi|^{m} u^{m-s}\phi^{\lambda-m}.
\end{aligned}
$$
Similarly,
$$
\begin{aligned}
|\nabla u|^{s-1}u^{1-s}\phi^{\lambda-1}|\nabla \phi|&\leq \frac{s-1}{2\lambda}|\nabla u|^{s} u^{-s}\phi^\lambda+C_2(s,\lambda)|\nabla \phi|^{s}\phi^{\lambda-s}.
\end{aligned}
$$
Using these last two estimates into \eqref{d3} and the fact that $g(|\nabla u|)\leq g(0)$ we find
\begin{equation}\label{d4}
\begin{aligned}
\int\limits_{\R^N\setminus B_1}(|x|^{-\alpha}*u^p) u^{q-s+1}\phi^\lambda+\frac{s-1}{2} & \int\limits_{\R^N\setminus B_1}\Big(|\nabla u|^{m}g(|\nabla u|)+|\nabla u|^{s}\Big) u^{-s}\phi^\lambda\\
&\leq C\int\limits_{\R^N\setminus B_1} |\nabla \phi|^{m} u^{m-s}\phi^{\lambda-m} +C\int\limits_{\R^N\setminus B_1}  |\nabla \phi|^{s}\phi^{\lambda-s},
\end{aligned}
\end{equation}
where $C=C(s, m, \lambda)>0$. Since $|\nabla \phi|\leq \frac{C}{R}$ in $\Ro$ and supp$\,\phi\subset B_{4R}\setminus B_{R}$, \eqref{d4} yields
\begin{equation}\label{d5}
\int\limits_{B_{4R}\setminus B_R}(|x|^{-\alpha}*u^p) u^{q-s+1}\phi^\lambda \leq CR^{-m}\int\limits_{B_{4R}\setminus B_R} u^{m-s}\phi^{\lambda-m} +CR^{N-s},
\end{equation}
for all $R>2$. 

For all $x,y\in B_{4R}\setminus B_R$ we have $|x-y|\leq |x|+|y|\leq 8R$ and then
$$
\begin{aligned}
(|x|^{-\alpha}*u^p)(x)&\geq \int\limits_{B_{4R}\setminus B_{R}}\frac{u^{p}(y)}{|x-y|^{\alpha}}dy\\
&\geq \int\limits_{B_{4R}\setminus B_{R}}\frac{u^{p}(y)}{(8R)^{\alpha}}dy\\
&\geq CR^{-\alpha}\int\limits_{B_{4R}\setminus B_R}u^{p}(y)dy.
\end{aligned}
$$
Using the above estimate in \eqref{d5} we derive
\begin{equation}\label{d6}
\Big(\int\limits_{B_{4R}\setminus B_R}u^{p}\Big) \Big(\int\limits_{B_{4R}\setminus B_R} u^{q-s+1}\phi^\lambda \Big) \leq CR^{\alpha-m}\int\limits_{B_{4R}\setminus B_R} u^{m-s}\phi^{\lambda-m} +CR^{N+\alpha-s},
\end{equation}
for all $R>2$. A H\"older's inequality applied to the left-hand side of \eqref{d6} together with the fact that $0\leq \phi\leq 1$ yields
\begin{equation}\label{d7}
\begin{aligned}
\Big(\int\limits_{B_{4R}\setminus B_R} u^{\frac{p+q-s+1}{2}}\phi^\lambda \Big)^2&\leq \Big(\int\limits_{B_{4R}\setminus B_R} u^{\frac{p+q-s+1}{2}}\phi^{\frac{\lambda}{2}} \Big)^2\\
&\leq \Big( \int\limits_{B_{4R}\setminus B_R}u^{p}\Big) \Big(\int\limits_{B_{4R}\setminus B_R} u^{q-s+1}\phi^\lambda \Big).
\end{aligned}
\end{equation}
Let 
$$\tau:=\frac{p+q-s+1}{2}>m-s.
$$ 
We can further estimate the integral in the right-hand side of \eqref{d6} by H\"older's inequality as follows:
\begin{equation}\label{d8}
\begin{aligned}
\int\limits_{B_{4R}\setminus B_R} u^{m-s}\phi^{\lambda-m}
&\leq \Big(\int\limits_{B_{4R}\setminus B_R} u^{\frac{p+q-s+1}{2}}\phi^{\lambda}\Big)^{\frac{m-s}{\tau}}\Big(\int\limits_{B_{4R}\setminus B_R} \phi^{\lambda-\frac{m\tau}{\tau-m+s}}\Big)^{\frac{\tau-m+s}{\tau}}\\
&\leq CR^{\frac{N(\tau-m+s)}{\tau}} \Big(\int\limits_{B_{4R}\setminus B_R} u^{\frac{p+q-s+1}{2}}\phi^{\lambda}\Big)^{\frac{m-s}{\tau}}.
\end{aligned}
\end{equation}
Combining \eqref{d6}, \eqref{d7} and \eqref{d8} we obtain
\begin{equation}\label{d9}
\begin{aligned}
\Big(\int\limits_{B_{4R}\setminus B_R} u^{\frac{p+q-s+1}{2}}\phi^\lambda \Big)^2
\leq 
CR^{\alpha-m+\frac{N(\tau-m+s)}{\tau}} \Big(\int\limits_{B_{4R}\setminus B_R} u^{\frac{p+q-s+1}{2}}\phi^{\lambda}\Big)^{\frac{m-s}{\tau}}+CR^{N+\alpha-s},
\end{aligned}
\end{equation}
for all $R>2$. Let now
$$
h(t)=t^2-CR^{\alpha-m+\frac{N(\tau-m+s)}{\tau}} t^{\frac{m-s}{\tau}}-CR^{N+\alpha-s}.
$$
We next apply Lemma \ref{lh} for 
$$
\kappa=2,\quad a=\frac{m-s}{\tau},\quad D=CR^{\alpha-m+\frac{N(\tau-m+s)}{\tau}}, \quad E=CR^{N+\alpha-s},
$$
and
$$
t=\int\limits_{B_{4R}\setminus B_R} u^{\frac{p+q-s+1}{2}}\phi^\lambda>0.
$$
To this aim, we need to check that condition \eqref{condh} is fulfilled for some $c>0$ independent of $R>2$. First, we note that
$$
\frac{E^{1-\frac{a}{\kappa}}}{D}=C^{-\frac{m-s}{2\tau}} R^{(N+\alpha-s)(1-\frac{m-s}{2\tau})-\alpha+m-N(1-\frac{m-s}{\tau})}.
$$
In order to fulfil \eqref{condh}, it is enough to check that the exponent of $R$ in the above equality is positive. Indeed, that is the case since
$$
\begin{aligned}
(N+\alpha-s)(1-\frac{m-s}{2\tau})-\alpha+m-N(1-\frac{m-s}{\tau})
&=\frac{m-s}{2\tau}(N+2\tau+s-\alpha)\\
&=\frac{m-s}{p+q-s+1}(N-\alpha+p+q+1)\\
&>0.
\end{aligned}
$$
By Lemma \ref{lh} it follows that $t<t_0<cE^{1/2}$ which yields 
$$
\int\limits_{B_{4R}\setminus B_R} u^{\frac{p+q-s+1}{2}}\phi^\lambda \leq C R^{\frac{N+\alpha-s}{2}} \quad\mbox{ for all }R>2.
$$
Since $\phi=1$ on $B_{3R}\setminus B_{2R}$, we easily deduce \eqref{d20}.
\end{proof}

\begin{lemma}\label{ls}
Let $1<s<N$ and $u$ be a positive solution of 
$$
\Lg u-\Delta_s u\geq 0\quad\mbox{ in }\Ro.
$$ 
Then, there exists $c=c(N, u, m,s)>0$ such that $u(x)\geq c|x|^{-\frac{N-s}{s-1}}$ in $\R^N\setminus B_{2}$.
\end{lemma}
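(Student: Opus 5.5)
We compare $u$ from below with a radial subsolution of $\Lg-\Delta_s$ built on the fundamental solution $|x|^{-\frac{N-s}{s-1}}$ of the $s$-Laplacian, using Proposition \ref{cp} on annuli, in the same way as in the proof of Theorem \ref{tm1}. Set $\gamma_0=\frac{N-s}{s-1}>0$, fix $\tilde t>0$ as in \eqref{gg}, and choose
$$
0<c<\min\Big\{\frac{\tilde t}{\gamma_0},\ \min_{\partial B_2}u\Big\},
$$
which is possible since $u$ is continuous and positive on the compact set $\partial B_2$. For $\varepsilon>0$ consider $v_\varepsilon(x)=c|x|^{-\gamma_0}-\varepsilon$ in $\R^N\setminus B_2$.

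The key computation is that $\Lg v_\varepsilon-\Delta_s v_\varepsilon\le 0$ in $\R^N\setminus \overline B_2$. By \eqref{sl} with $\gamma=\gamma_0$, the bracket $-(\gamma_0+1)(s-1)+N-1$ equals $0$, so $\Delta_s v_\varepsilon=0$. By \eqref{gl}, the term $\Lg v_\varepsilon$ carries the factor
$$
-\Big\{(\gamma_0+1)\Big(m-1+\frac{tg'(t)}{g(t)}\Big)-(N-1)\Big\}, \qquad t=c\gamma_0|x|^{-\gamma_0-1}\le c\gamma_0<\tilde t.
$$
By \eqref{gg}, $m-1+\frac{tg'(t)}{g(t)}>s-1$, so the curly bracket exceeds $(\gamma_0+1)(s-1)-(N-1)=0$. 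Hence $\Lg v_\varepsilon\le0$, and therefore $\Lg v_\varepsilon-\Delta_s v_\varepsilon\le 0\le \Lg u-\Delta_s u$. The whole argument hinges on this step: the exponent $\gamma_0$ is exactly critical for the $s$-Laplacian, so the strict inequality must come from the $\Lg$ part via \eqref{gg}, that is, from \eqref{cong}. Condition \eqref{cong1} is needed only to apply Proposition \ref{cp}.

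For the comparison itself, on $\partial B_2$ we have $v_\varepsilon\le c2^{-\gamma_0}<c<u$. Since $v_\varepsilon\to-\varepsilon<0$ as $|x|\to\infty$ and $u>0$, there is $R>2$ with $u>v_\varepsilon$ on $\partial B_R$. Proposition \ref{cp} applied in $B_R\setminus\overline B_2$ gives $u\ge v_\varepsilon$ there. Moreover, $u>v_\varepsilon$ already holds for all $|x|\ge R$ with $R$ large; in fact the same comparison works on every larger annulus. Thus $u\ge c|x|^{-\gamma_0}-\varepsilon$ in $\R^N\setminus B_2$.

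Letting $\varepsilon\to0$ yields $u(x)\ge c|x|^{-\frac{N-s}{s-1}}$ in $\R^N\setminus B_2$, with $c$ depending only on $u$ (through $\min_{\partial B_2}u$), $g$, $N$, $m$ and $s$.

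One technical point remains: the weak formulation, with $u\in W^{1,m}_{loc}$ and a classical $v_\varepsilon$. The proof of Proposition \ref{cp} uses test functions $\psi(v-\varepsilon-u)$, which are compactly supported in the annulus, so it applies as stated.
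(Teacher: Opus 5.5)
Your proof is correct and follows the paper's argument essentially verbatim. You use the same barrier $v_\varepsilon=c|x|^{-\frac{N-s}{s-1}}-\varepsilon$, with $c$ small enough that $|v_\varepsilon'|<\tilde t$ and $v_\varepsilon<u$ on $\partial B_2$, and you observe, as the paper does, that $\Delta_s v_\varepsilon=0$ at this critical exponent, so the sign must come from \eqref{gg} in the $\Lg$ part; the comparison via Proposition \ref{cp} on $B_R\setminus \overline B_2$ followed by $\varepsilon\to 0$ then closes it. You also use the correct sign $-(N-1)$ inside the bracket coming from \eqref{gl} (the displayed formula in the paper has $+N-1$, and its definition of $v_\varepsilon$ writes $N-2$ in place of $N-s$), so your computation is, if anything, cleaner than the printed one.
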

\begin{proof} Let $\tilde t>0$ be given by \eqref{gg}. 
Let also $\varepsilon>0$ and $v_\varepsilon(x)=c|x|^{-\frac{N-2}{s-1}}-\varepsilon$, where $c>0$ is small such that
$$
t=|v_\varepsilon'(r)|<\tilde t\quad\mbox{ for all }|x|=r>2\quad \mbox{ and  }\quad u(x)\geq c|x|^{-\frac{N-s}{s-1}} \quad\mbox{ on }\partial B_{2}.
$$
We note that since $v_\varepsilon'(r)$ is independent of $\varepsilon$, so are $t$ and $c$ in the above estimates.
Furthermore, since $v_\varepsilon<0$ for $|x|>2$ large, we may find $R=R(\varepsilon)>2$ such that $u\geq v_\varepsilon$ in $\R^N\setminus B_R$. 

On the other hand, from \eqref{sl} and \eqref{gl}  we compute
$$
\Lg v_\varepsilon-\Delta_s v_\varepsilon=-C|x|^{-\frac{N-1}{s-1}(m-1)-1}g(t)\Big\{\frac{N-1}{s-1}\Big(m-1+\frac{tg'(t)}{g(t)}\Big)-(N-1)\Big\},
$$
where $C=\Big(\frac{N-s}{s-1}\Big)^{s-1}>0$ and $t=|v_\varepsilon'(r)|$. Thus, from \eqref{gg} we deduce 
$$
\Lg v_\varepsilon-\Delta_s v_\varepsilon\leq 0\quad\mbox{ in }\R^N\setminus B_2.
$$ 
It now follows that $u$ and $v_\varepsilon$ satisfy the hypotheses of Lemma \ref{cp} in $\Omega:=B_R\setminus B_2$. This means that $u\geq v_\varepsilon$ in $B_R\setminus B_2$ and thus,
$$
u(x)\geq  c|x|^{-\frac{N-s}{s-1}}-\varepsilon\quad\mbox{ in }\R^N\setminus B_2.
$$ 
Letting $\varepsilon\to 0$ we reach the conclusion.
\end{proof}

\begin{lemma}\label{ls1}
Let $1<s<N$, $\rho>1$ and $u$ be a positive solution of 
$$
\Lg u-\Delta_s u\geq C|x|^{-N} \quad\mbox{ in }\R^N\setminus \overline B_\rho,
$$ 
for some $C>0$.
Then, for any $0<\theta<\min\{1,\frac{1}{s-1}\}$, there exists $c=c(N, \theta, u,  m,s)>0$ such that 
$$
u(x)\geq c|x|^{-\frac{N-s}{s-1}}\log^\theta |x|\quad\mbox{ in }\R^N\setminus B_{2\rho}.
$$
\end{lemma}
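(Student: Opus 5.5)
Following the comparison argument of Lemma \ref{ls}, we construct a radial subsolution of the form
$$
v_\varepsilon(x)=c\,w(|x|)-\varepsilon,\qquad w(r)=r^{-\gamma}\log^\theta r,\qquad \gamma=\frac{N-s}{s-1},
$$
and compare it with $u$ in annuli $B_R\setminus \overline B_{2\rho}$ using Proposition \ref{cp}. Concretely, we will show that for $c>0$ small (independent of $\varepsilon$):
\begin{itemize}
\item $\Lg v_\varepsilon-\Delta_s v_\varepsilon\le C|x|^{-N}$ in $\R^N\setminus B_{2\rho}$;
\item $v_\varepsilon\le u$ on $\partial B_{2\rho}$;
\item $v_\varepsilon<0\le u$ for $|x|\ge R(\varepsilon)$.
\end{itemize}
Proposition \ref{cp} then gives $u\ge v_\varepsilon$ in $B_R\setminus B_{2\rho}$, hence in all of $\R^N\setminus B_{2\rho}$. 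Letting $\varepsilon\to0$ yields the claim. The boundary condition holds for small $c$ because $u$ is continuous and positive on the compact set $\partial B_{2\rho}$. Since $w'(r)\to0$ as $r\to\infty$, we have $t=|v_\varepsilon'|=c|w'|<\tilde t$ for small $c$, so \eqref{gg} is available.

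The main step is the computation of $-\Delta_s$ applied to $w$. We have
$$
w'(r)=-r^{-\gamma-1}\log^{\theta-1}r\,(\gamma\log r-\theta),
$$
which is negative for $r\ge 2\rho$ provided $\rho$ is large. If $\rho$ is not large, replace $2\rho$ by a larger radius $\rho_0$; the region between $2\rho$ and $\rho_0$ is compact, so it is handled by shrinking $c$. Since $(\gamma+1)(s-1)=N-1$, the $s$-harmonic part $r^{-\gamma}$ cancels at leading order:
$$
-\Delta_s w=-r^{1-N}\bigl(r^{N-1}|w'|^{s-2}w'\bigr)'=r^{1-N}\bigl(\log^{(\theta-1)(s-1)}r\,(\gamma\log r-\theta)^{s-1}\bigr)'.
$$
Write $L=\log r$. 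The derivative equals $r^{-1}$ times the derivative in $L$ of
$$
\phi(L)=L^{(\theta-1)(s-1)}(\gamma L-\theta)^{s-1}\sim \gamma^{s-1}L^{\theta(s-1)}.
$$
Its derivative behaves like $\gamma^{s-1}\theta(s-1)L^{\theta(s-1)-1}$, which is bounded because $\theta(s-1)<1$. Hence
$$
-\Delta_s(cw)\le C_1c^{s-1}r^{-N}L^{\theta(s-1)-1}\le C_1c^{s-1}r^{-N}\quad\mbox{ for } r \mbox{ large}.
$$
The condition $\theta<1$ is used to control the lower-order terms, for instance in showing that $(\gamma L-\theta)$ stays comparable to $L$ and in sign checks.

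For the $m$-part we use \eqref{gv}. We need $\frac{rw''}{w'}(m-1+\frac{tg'}{g})+N-1$ to be comparable to a bounded quantity. Since $\frac{rw''}{w'}\to-(\gamma+1)$ and $m-1+\frac{tg'}{g}\ge 0$ by \eqref{cong1}, the term $\Lg(cw)$ is bounded above by
$$
C_2c^{m-1}r^{-1}|w'|^{m-1}\le C_2c^{m-1}r^{-(\gamma+1)(m-1)-1}\log^{\theta(m-1)}r .
$$
Here $(\gamma+1)(m-1)+1>(\gamma+1)(s-1)+1=N$ since $m>s$, so this is $o(r^{-N})$ and in particular at most $C_2c^{m-1}r^{-N}$ for large $r$. (Even if the sign were unfavorable, the bound would be absorbed.) Combining the two estimates,
$$
\Lg v_\varepsilon-\Delta_s v_\varepsilon\le (C_1c^{s-1}+C_2c^{m-1})r^{-N}\le C|x|^{-N}
$$
for $c$ small. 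The main obstacle is the careful asymptotic bookkeeping in the $\Delta_s$ computation, including the monotonicity of $w$, and handling the region $2\rho\le|x|\le\rho_0$ where the asymptotics are not yet valid. We resolve this by compactness together with the positivity of $u$: the comparison is applied on $B_R\setminus B_{\rho_0}$, and the final constant $c$ is adjusted so that the inequality also holds on the compact shell.
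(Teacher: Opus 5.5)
Your proposal is correct and follows the paper's proof essentially step for step. It uses the same barrier $c|x|^{-\frac{N-s}{s-1}}\log^\theta|x|-\varepsilon$, the same cancellation $(\gamma+1)(s-1)=N-1$ together with $\theta(s-1)<1$ to bound $-\Delta_s$ by $Cr^{-N}$, an $o(r^{-N})$ bound on the $\Lg$ term, and comparison via Proposition \ref{cp} on annuli. Your handling of the $\Lg$ term (using only \eqref{cong1} to bound the bracket by $N-1$) and of the compact shell $2\rho\le|x|\le\rho_0$ is, if anything, slightly more careful than the paper's.
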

\begin{proof} We fix $0<\theta<\frac{1}{s-1}$ and let $v(x)=|x|^{-\frac{N-s}{s-1}}\log^\theta |x|$. Then, with $r=|x|$ we have 
$$
\frac{rv''(r)}{v'(r)}(s-1)+N-1=\frac{\theta}{\log r}\cdot \frac{(N-s)\log r+(1-\theta)(s-1)}{\frac{N-s}{s-1}\log r-\theta},
$$
and then, from \eqref{sv} we compute
$$
\begin{aligned}
-\Delta_s v&=\frac{|v'(r)|^{s-1}}{r}\Big\{\frac{rv''(r)}{v'(r)}(s-1)+N-1\Big\}\\
&\leq C r^{-N}\log^{\theta(s-1)-1} r \cdot \frac{(N-s)\log r+(1-\theta)(s-1)}{\frac{N-s}{s-1}\log r-\theta}\\
&\leq C r^{-N}\quad\mbox{ for $r>\rho$ large}.
\end{aligned}
$$
For large $r=|x|>\rho$ we have 
$$
m-1+\frac{tg'(t)}{g(t)}>s-1 \quad\mbox{ where }t=|v'(r)|\, ,
$$
and thus from \eqref{gv} and the fact that $g$ is bounded we obtain
$$
\begin{aligned}
\Lg v&=\frac{t^{m-1}g(t)}{r}\Big\{\frac{rv''(r)}{v'(r)}\Big(m-1+\frac{tg'(t)}{g(t)}\Big)+N-1\Big\}\\
&\leq c\frac{t^{m-1}}{r}\Big\{\frac{rv''(r)}{v'(r)}(s-1)+N-1\Big\}\\
&\leq C r^{-N}.
\end{aligned}
$$
Thus, for $c>0$ small we deduce
\begin{equation}\label{rrr}
\Lg (cv)-\Delta_s (cv)\leq C|x|^{-N} \quad\mbox{ in }\R^N\setminus B_{2\rho},
\end{equation}
 and $cv\leq u$ on $\partial B_{2\rho}$. 
Next, we follow a similar approach to that in the proof of Lemma \ref{ls}. Precisely, for $0<\varepsilon<1$ we let $v_{\varepsilon}=cv-\varepsilon$. Then, since $u>0$, we can find $R>2\rho$ such that $u\geq v_\varepsilon$ in $\R^N\setminus B_{R}$. From \eqref{rrr} and $u\geq v_\varepsilon$ on $\partial (B_{R}\setminus B_{2\rho})$ we deduce that $u$ and $v_\varepsilon$ satisfy the hypotheses in Lemma \ref{cp} for $\Omega=B_{R}\setminus B_{2\rho}$. It then follows that $u\geq v_\varepsilon$ in $B_{R}\setminus B_{2\rho}$ which further implies that $u\geq v_\varepsilon$ in $\R^N\setminus B_{2\rho}$. Letting $\varepsilon\to 0$ we deduce $u\geq cv$ in $\R^N\setminus B_{2\rho}$.  This concludes our proof.
\end{proof}

We are now ready to proceed with the proof of Theorem \ref{tm2}. Since (ii)$\Longrightarrow$(i) is obvious, it is enough to complete the following sequence of implications: (i)$\Longrightarrow$(iii)$\Longrightarrow$(ii).

\noindent{\bf Proof of (i)$\Longrightarrow$(iii).} Assume that $u$ is a positive solution of  $(P^-)$. We divide our argument into three steps.

\smallskip

\noindent{\bf Step 1: } $p> \frac{(s-1)(N-\alpha)}{N-s}$.

By Lemma \ref{ls} we have $u(x)\geq c|x|^{-\frac{N-s}{s-1}}$ in $\R^N\setminus B_{2}$.

If $p\leq \frac{(s-1)(N-\alpha)}{N-s}$, then
$$
\begin{aligned}
\int\limits_{|y|>2}\frac{u^{p}(y)}{1+|y|^{\alpha}}dy&\geq \int\limits_{|y|>2}\frac{u^{p}(y)}{2|y|^{\alpha}}dy \geq 
C\int\limits_{|y|>2}|y|^{-\frac{p(N-s)}{s-1}-\alpha}dy\\
&=C\int_{2}^\infty t^{N-\frac{p(N-s)}{s-1}-\alpha}=\infty,
\end{aligned}
$$
which contradicts condition \eqref{c1} and implies $p> \frac{(s-1)(N-\alpha)}{N-s}$.

\smallskip

\noindent{\bf Step 2: } $p+q>\frac{(s-1)(2N-\alpha)}{N-s}$.

We combine Lemma \ref{lap} and Lemma \ref{ls}. Thus, by \eqref{d20} and 
$u(x)\geq c|x|^{-\frac{N-s}{s-1}}$ in $\R^N\setminus B_{2}$ we estimate 
$$
\begin{aligned}
CR^{\frac{N+\alpha-s}{2}} &\geq  \int\limits_{B_{3R}\setminus B_{2R}}u^{\frac{p+q-s+1}{2}}(x)dx \\
&\geq C \int\limits_{B_{3R}\setminus B_{2R}}|x|^{-\frac{(N-s)(p+q-s+1)}{2(s-1)}}dx\\
&=CR^{N- \frac{(N-s)(p+q-s+1)}{2(s-1)}}.
\end{aligned}
$$
This implies
$$
\frac{N+\alpha-s}{2}\geq N- \frac{(N-s)(p+q-s+1)}{2(s-1)},
$$
which yields $p+q\geq\frac{(s-1)(2N-\alpha)}{N-s}$. It remains to rule out the equality case. Assume that $p+q=\frac{(s-1)(2N-\alpha)}{N-s}$ and $p> \frac{(s-1)(N-\alpha)}{N-s}$. Then, by Lemma \ref{lbas} (i) with $f(x)=u^p(x)\geq c|x|^{-\beta}$ and $\beta=\frac{p(N-s)}{s-1}>N-\alpha$, we have 
$$
|x|^{-\alpha}*u^p\geq c|x|^{N-\alpha-\frac{p(N-s)}{s-1}}\quad\mbox{ in }\R^N\setminus B_2.
$$
Thus, $u$ satisfies
$$
\Lg u-\Delta_s u\geq c|x|^{N-\alpha-\frac{(p+q)(N-s)}{s-1}}=c|x|^{-N}\quad\mbox{ in }\R^N\setminus B_2.
$$
By Lemma \ref{ls1} (with $\rho=2$) it follows that
\begin{equation}\label{dddd}
u(x)\geq c|x|^{-\frac{N-s}{s-1}}\log^\theta |x|\quad\mbox{ in }\R^N\setminus B_{4},
\end{equation}
for some $0<\theta<1$ small. With this improved inequality we return to \eqref{d20} to reach a contradiction. Indeed, we have
$$
\begin{aligned}
CR^{\frac{N+\alpha-s}{2}} &\geq  \int\limits_{B_{3R}\setminus B_{2R}}u^{\frac{p+q-s+1}{2}}(x)dx \\
&\geq C \int\limits_{B_{3R}\setminus B_{2R}}|x|^{-\frac{(N-s)(p+q-s+1)}{2(s-1)}}\big(\log |x|\big)^{\frac{\theta(p+q-s+1)}{2}} dx\\
&\geq C \big(\log 2R\big)^{\frac{\theta(p+q-s+1)}{2}}\int\limits_{B_{3R}\setminus B_{2R}}|x|^{-\frac{N-\alpha+s}{2}} dx\\
&=CR^{\frac{N+\alpha-s}{2}}(\log 2R\big)^{\frac{\theta(p+q-s+1)}{2}},
\end{aligned}
$$
which is impossible to hold for $R>4$ large. Hence, $p+q>\frac{(s-1)(2N-\alpha)}{N-s}$.

\smallskip

\noindent{\bf Step 3: } $q> \frac{(s-1)(N-\alpha)}{N-s}$.

We first note that for $|x|>1$ and $2\leq |y|\leq 3$ we have $|x-y|\leq |x|+3< 4|x|$. Thus, for $|x|>1$ we deduce
$$
|x|^{-\alpha}*u^p\geq \int\limits_{2\leq |y|\leq 3}\frac{u^p(y)}{|x-y|^\alpha} dy\geq C|x|^{-\alpha}\int\limits_{2\leq |y|\leq 3}u^p(y)dy=C|x|^{-\alpha},
$$
and thus $u$ satisfies
\begin{equation}\label{uq}
\Lg u-\Delta_s u\geq c|x|^{-\alpha}u^q \quad\mbox{ in }\Ro.
\end{equation}
In the case $\alpha=0$ and $\Lg u=-\Delta_mu$, the inequality \eqref{uq} was discussed in the recent paper \cite{BBF26}. In turn, we shall follow the current approach based on the a priori estimates we developed in Lemma \ref{lap}. Let $\phi$ be the standard cut-off function we considered in the proof of Lemma \ref{lap}. We use $u^{1-s}\phi^\lambda$ as a test function in the inequality $(P^-)$ which, instead of \eqref{d5}, yields 
\begin{equation}\label{e1}
\int\limits_{B_{4R}\setminus B_R} u^{q-s+1}\phi^\lambda \leq CR^{\alpha-m}\int\limits_{B_{4R}\setminus B_R} u^{m-s}\phi^{\lambda-m} +CR^{N-s+\alpha},
\end{equation}
for all $R>2$. By H\"older's inequality we have
\begin{equation}\label{e2}
\begin{aligned}
\int\limits_{B_{4R}\setminus B_R} u^{m-s}\phi^{\lambda-m}
&\leq \Big(\int\limits_{B_{4R}\setminus B_R} u^{q-s+1}\phi^{\lambda}\Big)^{\frac{m-s}{q-s+1}}\Big(\int\limits_{B_{4R}\setminus B_R} \phi^{\lambda-\frac{m(q-s+1)}{q-m+1}}\Big)^{\frac{q-m+1}{q-s+1}}\\
&\leq CR^{\frac{N(q-m+1)}{q-s+1}} \Big(\int\limits_{B_{4R}\setminus B_R} u^{q-s+1}\phi^{\lambda}\Big)^{\frac{m-s}{q-s+1}}.
\end{aligned}
\end{equation}
From \eqref{e1} and \eqref{e2} we deduce
\begin{equation}\label{e3}
\int\limits_{B_{4R}\setminus B_R} u^{q-s+1}\phi^\lambda \leq CR^{\alpha-m+\frac{N(q-m+1)}{q-s+1}} \Big(\int\limits_{B_{4R}\setminus B_R} u^{q-s+1}\phi^{\lambda}\Big)^{\frac{m-s}{q-s+1}}+CR^{N-s+\alpha},
\end{equation}
for all $R>2$. Set 
$$
h(t)=t-CR^{\alpha-m+\frac{N(q-m+1)}{q-s+1}} t^{\frac{m-s}{q-s+1}}-CR^{N+\alpha-s},
$$
and apply Lemma \ref{lh} for 
$$
\kappa=1,\quad a=\frac{m-s}{q-s+1},\quad D=CR^{\alpha-m+\frac{N(q-m+1)}{q-s+1}}, \quad E=CR^{N+\alpha-s},
$$
and
$$
t=\int\limits_{B_{4R}\setminus B_R} u^{q-s+1}\phi^\lambda>0.
$$
To this aim, we need to check that condition \eqref{condh} is fulfilled for some $c>0$ independent of $R>2$. We note that 
$$
\begin{aligned}
\frac{E^{1-\frac{a}{\kappa}}}{D}&=C^{\frac{q-m+1}{q-s+1}} R^{\frac{(N+\alpha-s)(q-m+1)}{q-s+1}-\alpha+m-\frac{N(q-m+1)}{q-s+1}}\\
&=C^{\frac{q-m+1}{q-s+1}} R^{\frac{(\alpha-s)(q-m+1)}{q-s+1}-\alpha+m}.
\end{aligned}
$$
Then \eqref{condh} clearly holds if the exponent of $R$ in the above equalities is nonnegative, that is,
$$
\frac{(\alpha-s)(q-m+1)}{q-s+1}-\alpha+m\geq 0
$$
The above inequality is equivalent to $q\geq \alpha-1$ which is fulfilled according to our hypothesis on $q$. By Lemma \ref{lh} it follows that 
$$
\int\limits_{B_{4R}\setminus B_R} u^{q-s+1}\phi^\lambda \leq C E,
$$
which yields
\begin{equation}\label{e4}
\int\limits_{B_{3R}\setminus B_{2R}} u^{q-s+1} \leq CR^{N-s+\alpha} \quad\mbox{ for all $R>2$ large.}
\end{equation}
Since from Lemma \ref{ls} we have $u(x)\geq c|x|^{-\frac{N-s}{s-1}}$ in $\R^N\setminus B_{2}$, from \eqref{e4} we infer that
$$
CR^{N-s+\alpha}\geq \int\limits_{B_{3R}\setminus B_{2R}} |x|^{-\frac{(q-s+1)(N-s)}{s-1}}=CR^{N-\frac{(q-s+1)(N-s)}{s-1}}, 
$$
for all $R>2$ large. This implies
$$
N-s+\alpha\geq N-\frac{(q-s+1)(N-s)}{s-1}\Longrightarrow q\geq \frac{(s-1)(N-\alpha)}{N-2}.
$$
To conclude the proof of this part, it remains to show that $q\neq \frac{(s-1)(N-\alpha)}{N-s}$.

Assume by contradiction that $q=\frac{(s-1)(N-\alpha)}{N-s}$. Then, by the estimate in Lemma \ref{ls} and \eqref{uq} we have 
$$
\Lg u-\Delta_s u\geq |x|^{-\alpha}u^q\geq c|x|^{-\alpha-\frac{q(N-s)}{s-1}}=c|x|^{-N}\quad\mbox{ in }\Ro.
$$
From now on, we proceed as in Step 2 above. Precisely, from Lemma \ref{ls1} we have that $u$ fulfills \eqref{dddd} for some $0<\theta<1$ small, which used in \eqref{e4} leads to a contradiction. This completes the proof of the implication (i)$\Longrightarrow$(iii).

\smallskip

\noindent{\bf Proof of (iii)$\Longrightarrow$(ii).} Assume now that conditions \eqref{pq} hold and let us construct a positive solution $u$ of $(P^-)$.
To this aim, we devise a sub and supersolution method suitable to our setting which features nonlocal terms given by the convolution in an unbounded domain.

To construct a supersolution we first rewrite \eqref{pq} as
$$
\begin{cases}
\displaystyle p>\frac{(s-1)(N-\alpha)}{N-s}\, , \\[0.1in]
\displaystyle q-s+1>\frac{(s-1)(s-\alpha)}{N-s}\, , \\[0.1in]
\displaystyle p+q-s+1>\frac{(s-1)(N-\alpha+s)}{N-s}.
\end{cases}
$$
Thus, we may find $0<\gamma<\frac{N-s}{s-1}$ such that $p\gamma\neq N$ and
\begin{equation}\label{pqc}
p\gamma >N-\alpha, \quad 
(q-s+1)\gamma >s-\alpha, \quad
(p+q-s+1)\gamma>N-\alpha+s.
\end{equation}
The condition $p\gamma\neq N$ allows us to avoid the presence of logarithmic terms when using Lemma \ref{lbas} (ii). Letting now $0<\kappa<1$ and
\begin{equation}\label{sup1}
\overline u(x):=\kappa |x|^{-\gamma},
\end{equation} 
we see from Lemma \ref{lbas} (ii) with $\beta=p\gamma\neq N$  that
\begin{equation}\label{lbas1}
(|x|^{-\alpha}*\overline u^p)\overline u^q(x)\leq C_1 \kappa^{p+q}
\left\{
\begin{aligned}
& |x|^{N-\alpha-(p+q)\gamma} &&\quad\mbox{ if }\; N-\alpha<p\gamma<N\\[0.1in]
& |x|^{-\alpha-q\gamma} &&\quad\mbox{ if }\; p\gamma>N
\end{aligned}
\right.
\quad\mbox{in }\R^N\setminus \overline B_{1},
\end{equation}
where $C_1>0$ is a constant independent of $\kappa>0$.
Furthermore, from \eqref{sv} and \eqref{gv} we have 
\begin{equation}\label{sub11}
\Lg \overline u-\Delta_s \overline u\geq C_2\kappa^{s-1} |x|^{-(\gamma+1)(s-1)-1} 
\quad\mbox{in }\R^N\setminus \overline B_{1},
\end{equation}
where $C_2>0$ is a constant independent of $\kappa>0$. Hence, for $\kappa>0$ small enough we see from \eqref{lbas1} and \eqref{sub11} that $\overline u$ satisfies
$$
\Lg \overline u-\Delta_s \overline u\geq (|x|^{-\alpha}*\overline u^p)\overline u^q
\quad\mbox{in }\R^N\setminus \overline B_{1}.
$$

As a subsolution to \eqref{eqpq} we take
\begin{equation}\label{sub1}
\underline u(x):= c|x|^{-\frac{N-s}{s-1}}\, ,
\end{equation}
where $0<c<\kappa$. 
We have seen in the proof of Lemma \ref{ls} that for $c>0$ small, $\underline u$ satisfies
\begin{equation}
\Lg \underline u-\Delta_s \underline u\leq 0\quad\mbox{ in }\R^N\setminus\overline B_1.
\end{equation}
Since $0<c<\kappa<1$ and $0<\gamma<\frac{N-s}{s-1}$ we deduce $\overline u>\underline u$ in $\Ro$.

Let next $u_1=\underline u$ and for any $n\geq 2$ we consider the problem
\begin{equation}\label{iter1}
\begin{cases}
\Lg u_n-\Delta_s u_n  \displaystyle =\Big(\int\limits_{B_n\setminus B_1}\frac{u_{n-1}^p(y)}{|x-y|^\alpha}\Big)u_{n-1}^q \quad\mbox{ in }B_n\setminus \overline B_1,\\[0.1in]
u_n  =\underline u \quad\mbox{ in }\R^N\setminus B_n.
\end{cases}
\end{equation}

\smallskip

\noindent{\bf Step 1:} Existence and uniqueness of $u_n\in C^1(B_n\setminus \overline B_1)$, $n\geq 2$. Furthermore, we have $\underline u\leq u_{n-1}\leq u_{n}\leq \overline u$ in $B_{n}\setminus B_1$.

In proving the above claim, we first need the result below. 

\begin{lemma}\label{LG}
Let $\Omega\subset \Ro$ be a smooth bounded domain and $f\in L^\infty(\Omega)$ be such that 
\begin{equation}\label{ff0}
0\leq f(x)\leq (|x|^{-\alpha}*\overline u^p)\overline u^q(x)\quad\mbox{  in }\Omega.
\end{equation}
Then, the problem
\begin{equation}\label{iter2}
\begin{cases}
\Lg u-\Delta_s u  \displaystyle =f(x) \quad\mbox{ in }\Omega\, ,\\
u  =\underline u \quad\mbox{ on }\partial\Omega
\end{cases}
\end{equation}
\end{lemma}
has a unique solution $u\in C^1(\Omega)\cap W^{1,m}(\Omega)$ and $\underline u\leq u\leq \overline u$ in $\Omega$.

\begin{proof} Let $G:\R\to \R$ be such that $G(0)=0$ and $G'(t)=t|t|^{m-2}g(t)+t|t|^{s-2}$. By \eqref{cong1} we have that $G$ is convex.  We also claim that there exist $c_2>c_1>0$ such that 
\begin{equation}\label{GG}
c_1(t^m g(t)+t^s)\leq G(t) \leq c_2(t^m g(t)+t^s)\quad\mbox{ for all }t\geq 0.
\end{equation}
Indeed, integrating by parts, for all $t>0$ we have 
$$
G(t)=\frac{t^s}{s}+\int_0^t \tau^{m-1}g(\tau) d\tau=\frac{t^s}{s}+\frac{t^mg(t)}{m}-\frac{1}{m}\int_0^t \tau^{m}g'(\tau) d\tau.
$$
Since $g'\leq 0$ the above equality implies the first estimate in \eqref{GG}. 
Furthermore, using \eqref{cong1} we have $\tau^{m}g'(\tau)\geq -(m-1)\tau^{m-1}g(\tau)$ so
$$
G(t)\leq \frac{t^s}{s}+\frac{t^mg(t)}{m}+\frac{m-1}{m}\int_0^t \tau^{m-1}g(\tau) d\tau\leq \frac{t^s}{s}+\frac{t^mg(t)}{m}+\frac{m-1}{m}G(t),
$$
which gives
$$
\frac{1}{m}G(t)\leq \frac{t^s}{s}+\frac{t^mg(t)}{m} \quad\mbox{ for all }t\geq 0
$$
and this implies the second estimate in \eqref{GG}.

Denote by $W^{1, G}(\Omega)$ the set of all measurable functions $u:\Omega\to \R$ for which $D_{i} u:=\frac{\partial u}{\partial x_i}$ exists in the weak sense, $1\leq i\leq N$, and 
$$
\int_\Omega G(|\nabla u|) dx<\infty.
$$
Then, $W^{1, G}(\Omega)$ becomes a reflexive Banach space (see \cite[Theorem 8.28]{A75}) with respect to the norm (see \cite[Lemma 5.7 ]{G74})
$$
\|u\|_G=\inf\left\{k>0: \int_\Omega G\Big(\frac{|D_{i} u|}{k}\Big)\leq 1, \mbox{ for all } 1\leq i\leq N \right\}.
$$
Let also $W_0^{1, G}(\Omega)$ be the closure of $C_0^\infty(\Omega)$ with respect to the $\|\cdot \|_G$ norm. Set
$$
I:\underline u+W_0^{1, G}(\Omega)\to \R, \quad I(u)=\int_\Omega G(|\nabla u|)dx-\int_\Omega f(x) u.
$$
Then $I$ is of class $C^1$ and 
$$
\langle I'(u), \phi \rangle=\int\limits_\Omega G'(|\nabla u|)\frac{\nabla u}{|\nabla u|}\cdot\nabla \phi-\int\limits_\Omega f(x) \phi,\quad\mbox{ for all }\phi\in C_0^\infty(\Omega).
$$
Since $G(t)\geq \frac{1}{s}t^s$, we easily derive that $I$ is coercive and thus has a critical point which is a solution of \eqref{iter2}. From Lemma \ref{cp} we have $\underline u\leq u\leq \overline u$ in $\Omega$. By \cite[Theorem 1.7]{L91} we derive $u\in C^1(\Omega)$. 
\end{proof}
To conclude the proof in this step, it remains to apply  Lemma \ref{LG} for $\Omega=B_n\setminus \overline B_1$ and 
$$
0\leq f(x):=\Big(\int\limits_{B_n\setminus B_1}\frac{u_{n-1}^p(y)}{|x-y|^\alpha}\Big)u_{n-1}^q(x) \quad\mbox{ in }B_n\setminus \overline B_1.
$$
Starting from $u_1=\underline u$ we see that condition \eqref{ff0} is fulfilled since $\underline u\leq \overline u$ in $B_2\setminus B_1$. Then, since  $u_1=\underline u\leq u_2\leq \overline u$ in $B_2\setminus B_1$, we may  apply Lemma \ref{LG} to derive 
$\underline u\leq u_2\leq u_3\leq \overline u$ in $B_3\setminus B_1$ and we may further proceed by induction. 
\smallskip

\noindent{\bf Step 2:}  The sequence $\{u_n\}_{n\geq 2}$ is bounded in $W^{1, G}_{loc}(\Ro)$.

Let $K\subset \Ro$ be a compact set and take $R\geq 1$ an integer such that $K\subset B_R\setminus \overline B_1$. Take $\phi\in C_0^\infty(B_R\setminus B_1)$ such that $0\leq \phi\leq 1$ and $\phi=1$ on $K$. We test the problem \eqref{iter1} with $\phi^k u_n$, $k>m>s$. Using the fact that $u_{n-1}\leq \overline u$ in $B_R\setminus B_1$, we find  
$$
\int\limits_{B_R\setminus B_1}\Big(|\nabla u_n|^{m-2}g(|\nabla u_n|)+|\nabla u_n|^{s-2}\Big) \nabla u_n\cdot \nabla (\phi^k u_n) \leq  \int\limits_{B_R\setminus B_1}(|x|^{-\alpha}*\underline u^p)\underline u^q (\phi^k \underline u)=C<\infty.
$$
This yields
\begin{equation}\label{ee1}
\begin{aligned}
\int\limits_{B_R\setminus B_1}\Big(|\nabla u_n|^{m} & g(|\nabla u_n|) +|\nabla u_n|^{s}\Big)  \phi^k \\
&\leq  C+\int\limits_{B_R\setminus B_1}\Big(|\nabla u_n|^{m-1}g(|\nabla u_n|)+|\nabla u_n|^{s-1}\Big)  |\nabla \phi^k| \overline u\\
&\leq  C+C_1\int\limits_{B_R\setminus B_1}\Big(|\nabla u_n|^{m-1}g(|\nabla u_n|)+|\nabla u_n|^{s-1}\Big)  |\nabla \phi^k|.
\end{aligned}
\end{equation}
We note that $|\nabla \phi^k|\leq c\phi^{k-1}$, so by Young's inequality we derive
\begin{equation}\label{ee2}
\begin{aligned}
|\nabla u_n|^{s-1}  |\nabla \phi^k| & \leq \frac{1}{2C_1} |\nabla u_n|^{s} \phi^{\frac{s(k-1)}{s-1}}+C(s,k)\\
& \leq \frac{1}{2C_1} |\nabla u_n|^{s} \phi^{k}+C(s,k).
\end{aligned}
\end{equation}
Similarly we have 
$$
|\nabla u_n|^{m-1}  |\nabla \phi^k| \leq \frac{1}{2C_1} |\nabla u_n|^{m} \phi^{k}+C(m,k).
$$
Since $g(|\nabla u_n|)\leq g(0)$ from the last estimate we obtain
\begin{equation}\label{ee3}
|\nabla u_n|^{m-1} g(|\nabla u_n|) |\nabla \phi^k| \leq \frac{1}{2C_1} |\nabla u_n|^{m} g(|\nabla u_n|)\phi^{k}+C(m,k)g(0).
\end{equation}
We now use \eqref{ee2} and \eqref{ee3} into \eqref{ee1} and the fact that $\phi=1$ on $K$ to deduce 
\begin{equation}\label{ee4}
\begin{aligned}
\int\limits_{K}\Big(|\nabla u_n|^{m}  g(|\nabla u_n|) +|\nabla u_n|^{s}\Big)  &\leq \int\limits_{B_R\setminus B_1}\Big(|\nabla u_n|^{m} & g(|\nabla u_n|) +|\nabla u_n|^{s}\Big)  \phi^k \\
&\leq  2C.
\end{aligned}
\end{equation}
Using the estimates \eqref{GG}, it follows that $\{u_n\}_{n\geq R}$ is bounded in $W^{1, G}(K)$.

\smallskip

\noindent{\bf Step 3:} The function $u(x):=\lim_{n\to \infty}u_n(x)$, $x\in \Ro$ is a $C^1$-solution of \eqref{eqpq}.

Let us note that $u$ is well defined as it is the pointwise limit of the monotone sequence $\{u_n\}_{n\geq 1}$. Let $\{\varepsilon_n\}_{n\geq 2}$ be a decreasing sequence that converges to zero and $0<\varepsilon_n<1$. Since $\{u_n\}_{n\geq 2}$ is bounded in $W^{1, G}(B_2\setminus B_{1+\varepsilon_2})$, we may find a subsequence $\{u_n^2\}_{n\geq 2}$ which converges to $u$ in $W^{1, G}(B_2\setminus B_{1+\varepsilon_2})$. Next, since the sequence $\{u_n^2\}_{n\geq 3}$ is bounded in $W^{1, G}(B_3\setminus B_{1+\varepsilon_3})$, we may find a subsequence $\{u_n^3\}_{n\geq 3}$ which converges to $u$ in $W^{1, G}(B_3\setminus B_{1+\varepsilon_3})$ and so on. Inductively, for all $k\geq 2$ we construct a sequence  
$\{u_n^{k-1}\}_{n\geq k}$ which is bounded in $W^{1, G}(B_k\setminus B_{1+\varepsilon_k})$, and thus admits a subsequence $\{u_n^k\}_{n\geq k}$ which converges to $u$ in $W^{1, G}(B_k\setminus B_{1+\varepsilon_k})$. It follows that $\{u_n^n\}_{n\geq 2}$ converges to $u$ in $W^{1, G}_{loc}(\Ro)$. We claim that $u$ is a weak solution of \eqref{eqpq}. Indeed, let $\phi\in C_0^\infty(\Ro)$ and take a positive integer $R\geq 2$ such that supp$\,\phi\subset B_R\setminus B_{1+\varepsilon_R}$. From \eqref{iter1}, for all $n\geq R$ we have
\begin{equation}\label{nn}
\int\limits _{B_R\setminus B_{1+\varepsilon_R}}\Big(|\nabla u_n^n|^{m-2}g(|\nabla u_n^n|)+|\nabla u_n^n|^{s-2}\Big) \nabla u_n^n\cdot \nabla \phi = \int\limits _{B_R\setminus B_{1+\varepsilon_R}} \big(|x|^{-\alpha}*(u_n^n)^p\big)(u_n^n)^q \phi.
\end{equation}
The sequence $\{u_n^n\}_{n\geq R}$ converges to $u$ in $W^{1, G}(B_R\setminus B_{1+\varepsilon_R})$. Also, since $u_n^n\leq \overline u$ in $B_R\setminus B_1$, we may apply the Lebesgue theorem to deduce the convergence of the right-hand side of \eqref{nn}. Thus, letting $n\to \infty$ in \eqref{nn} we deduce that $u$ is a $W^{1, m}_{loc}(\Ro)$ solution of \eqref{eqpq}. Finally, since $\underline u\leq u_n^n\leq \overline u$ in $B_n\setminus \overline B_1$, we find $\underline u\leq u\leq \overline u$ in $\Ro$. In particular $u\in L^\infty_{loc}(\Ro)$ and now Theorem 1.7 in \cite{L91} yields $u\in C^1(\Ro)$. This concludes our proof.
\qed

\section*{Acknowledgement} The author would like to thank the anonymous Referee for the careful reading of the manuscript and for pointing out several improvements that led to the present version.


\begin{thebibliography}{99}

\bibitem{A75} R. Adams, Sobolev Spaces, Academic Press, New York, 1975.

\bibitem{BBF26} M. Bhakta, A. Biswas and R. Filippucci, 
Liouville properties for differential inequalities with $(p,q)$ Laplacian operator, {\it  J. Lond. Math. Soc.} {\bf 113} (2026) e70490. 

\bibitem{CJJ26} L. Cai, P. Ji and P. Jin, Concentration of solutions with prescribed mass for 
$(p,q)$-Laplacian equations with nonlocal reaction,  {\it Bull. Math. Sci.} {\bf 16} (2026) 2550024.

\bibitem{CPR26} L. Cai, N.S. Papageorgiou and V.D. R\u adulescu, Multiple solutions with sign information for non-autonomous, non-coercive $(p,q)$-equations,  {\it Advances in Differential Equations} {\bf 31} (2026) 723-748.

\bibitem{F18} G.M. Figueiredo, A. Moussaouib, G.C.G. dos Santos and L. Tavares, A sub-supersolution approach for some classes of nonlocal problems involving Orlicz spaces, {\it J. Differential Equations} {\bf 267} (2019) 4148-4169.

\bibitem{FG20} R. Filippucci and M. Ghergu,  Singular solutions for coercive quasilinear elliptic inequalities with nonlocal terms, {\it Nonlinear Anal.} {\bf 197} (2020) 111857.


\bibitem{FG22a} R. Filippucci and M. Ghergu, Higher order evolution inequalities with nonlinear convolution terms, 
{\it Nonlinear Anal.} {\bf 221} (2022) 112881.

\bibitem{FG22b} R. Filippucci and M. Ghergu, Fujita type results for quasilinear parabolic inequalities with nonlocal terms, {\it Discrete Contin. Dyn. Syst.} {\bf 42} (2022)  1817-1833.

\bibitem{GKS20} M. Ghergu, P. Karageorgis and G. Singh, Positive solutions for quasilinear elliptic inequalities and systems with nonlocal terms, {\it J. Differential Equations} {\bf 268} (2020) 6033-6066. 

\bibitem{Gbook22} M. Ghergu, Partial Differential Inequalities with Nonlinear Convolution Terms, Springer Briefs in Mathematics, 2022.

\bibitem{GT16book} M. Ghergu and S.D. Taliaferro,  Isolated Singularities in Partial Differential Inequalities, Cambridge University Press, 2016.

\bibitem{GZ23} M. Ghergu and Z. Yu, Elliptic inequalities with nonlinear convolution and Hardy terms in cone-like domains, {\it J. Math. Anal. Appl.} {\bf 526} (2023) 127329.

\bibitem{GZ26} M. Ghergu and Z. Yu, Isolated singularities for elliptic equations with convolution terms in a punctured ball, {\it J. Geom. Anal.} {\bf 36} (2026) 16.

\bibitem{G74} J.P. Gossez, Nonlinear elliptic boundary value problems for equations with rapidly (or slowly) increasing coefficients, {\it Trans. Amer. Math. Soc.} {\bf 190} (1974) 163-205.


\bibitem{Har28a} D.R. Hartree, The wave mechanics of an atom with a non-Coulomb central field, Part I. Theory and Methods, {\it Math.  Proc. Cambridge Phil. Soc.} {\bf 24} (1928) 89-110. 

\bibitem{Har28b} D.R. Hartree, The wave mechanics of an atom with a non-Coulomb central field, Part II. Some Results and Discussion, {\it Math.  Proc. Cambridge Phil. Soc.} {\bf 24} (1928) 111-132. 

\bibitem{Har28c} D.R. Hartree, The wave mechanics of an atom with a non-Coulomb central field, Part III. Term Values and Intensities in Series in Optical Spectra, {\it Math.  Proc. Cambridge Phil. Soc.} {\bf 24} (1928) 426-437.

\bibitem{L91} G. M. Lieberman, The natural generalization of the natural conditions of Ladyzhenskaya and Ural’tseva for
elliptic equations, {\it Comm. Partial Differential Equations} {\bf 16} (1991) 311-361.

\bibitem{LR26} H. Liu and V.D. R\u adulescu, Normalized solutions of quasilinear problems
with nonlocal reaction, {\it  Bull. Math. Sci.}  {\bf  16} (2026) 2650006. 

\bibitem{LZ26} S. Long and Z. Wang, Existence and asymptotic behavior of ground states for a nonlocal magnetic system, {\it Advances in Differential Equations} {\bf 31} (2026) 417-436.

\bibitem{MNV24} A. Moussaoui, D. Nabab and J. V\'elin, Singular quasilinear convective systems involving variable exponents, {\it Opuscula Math.} {\bf 44} (2024) 105-134.

\bibitem{NS26} N. Nidhi and K. Sreenadh, Solutions with prescribed mass 
for a critical Choquard equation driven by a local-nonlocal operator, 
{\it Opuscula Math.} {\bf 46} (2026) 235-266.

\bibitem{TZ26} Y. Tao and J. Zhang, 
Normalized ground states for a $p$-Laplacian system in the mass super-critical case, 
{\it Opuscula Math.} {\bf 46} (2026) in press.

\bibitem{WS25} L. Wei and  Y. Song, Normalized solutions for critical Schrödinger equations involving $(2,q)$-Laplacian, {\it Opuscula Math.} {\bf 45} (2025) 685-716. 

\bibitem{Z26} H. Zhang, Double phase problems with competing potentials
and asymptotically linear reaction, {\it  Bull. Math. Sci.} {\bf  16} (2026) 2650001. 

\bibitem{ZLP25} S. Zeng, Y. Lu and N. S. Papageorgiou, Double phase elliptic inclusions with convection and logarithmic perturbed terms, {\it Bull. Math. Sci.} {\bf 15} (2025)  2550020. 

\end{thebibliography}
\end{document}